\documentclass[hidelinks, onefignum,onetabnum]{siamart250211}

\usepackage{lipsum}
\usepackage{amsfonts}
\usepackage{amssymb}
\usepackage{mathrsfs}
\usepackage{graphicx}
\usepackage{epstopdf}
\usepackage{algorithmic}
\usepackage{todonotes}
\usepackage{longtable}
\usepackage{diagbox}
\usepackage{bm}
\usepackage{blkarray}
\usepackage[table]{xcolor}
\usepackage{booktabs}
\usepackage{mwe}
\usepackage{mathtools}
\usepackage{tikz}
\usetikzlibrary{trees,calc,positioning,decorations.markings,arrows,backgrounds,shapes,
 shapes.geometric, fit}
\newcommand{\convexpath}[2]{
[
create hullcoords/.code={
 \global\edef\namelist{#1}
 \foreach [count=\counter] \nodename in \namelist {
   \global\edef\numberofnodes{\counter}
   \coordinate (hullcoord\counter) at (\nodename);
 }
 \coordinate (hullcoord0) at (hullcoord\numberofnodes);
 \pgfmathtruncatemacro\lastnumber{\numberofnodes+1}
 \coordinate (hullcoord\lastnumber) at (hullcoord1);
},
create hullcoords
]
($(hullcoord1)!#2!-90:(hullcoord0)$)
\foreach [
evaluate=\currentnode as \previousnode using \currentnode-1,
evaluate=\currentnode as \nextnode using \currentnode+1
] \currentnode in {1,...,\numberofnodes} {
 let \p1 = ($(hullcoord\currentnode) - (hullcoord\previousnode)$),
 \n1 = {atan2(\y1,\x1) + 90},
 \p2 = ($(hullcoord\nextnode) - (hullcoord\currentnode)$),
 \n2 = {atan2(\y2,\x2) + 90},
 \n{delta} = {Mod(\n2-\n1,360) - 360}
 in
 {arc [start angle=\n1, delta angle=\n{delta}, radius=#2]}
 -- ($(hullcoord\nextnode)!#2!-90:(hullcoord\currentnode)$)
}
}

\definecolor{seabornblue}{rgb}{0.2980392156862745, 0.4470588235294118, 
0.6901960784313725}
\definecolor{seaborngreen}{rgb}{0.3333333333333333, 0.6588235294117647, 
0.40784313725490196}
\definecolor{seabornred}{rgb}{0.7686274509803922, 0.3058823529411765, 
0.3215686274509804}
\definecolor{seabornpurple}{rgb}{0.5058823529411764, 0.4470588235294118, 
0.6980392156862745}
\definecolor{seabornsand}{rgb}{0.8, 0.7254901960784313, 0.4549019607843137}
\definecolor{seaborncyan}{rgb}{0.39215686274509803, 0.7098039215686275, 
0.803921568627451}
\definecolor{seabornorange}{rgb}{0.958, 0.476, 0.206}

\usepackage{caption}
\usepackage{subcaption}
\ifpdf
\DeclareGraphicsExtensions{.eps,.pdf,.png,.jpg}
\else
\DeclareGraphicsExtensions{.eps}
\fi

\newsiamremark{remark}{Remark}
\newsiamremark{hypothesis}{Hypothesis}
\crefname{hypothesis}{Hypothesis}{Hypotheses}
\newsiamthm{claim}{Claim}
\newsiamremark{fact}{Fact}
\crefname{fact}{Fact}{Facts}

\headers{Preconditioners for Multicomponent Diffusion}{K.~Knook, A.~Baier-Reinio, 
P.~E.~Farrell}

\title{Preconditioners for the Onsager--Stefan--Maxwell equations for 
multicomponent diffusion\thanks{
\funding{KK was supported by a Mathematical Institute Scholarship from the 
University of Oxford. ABR was supported by a Clarendon scholarship from the 
University of Oxford. PEF was supported by 
the Engineering and Physical Sciences Research Council [grant no.~EP/W026163/1],
the Science and Technology Facilities Council [grant no.~UKRI/ST/B000495/1],
the Donatio Universitatis Carolinae Chair ``Mathematical modelling of multicomponent systems'',
the UKRI Digital Research Infrastructure Programme through the Science and Technology Facilities Council's Computational Science Centre for Research Communities (CoSeC),
the Swedish Research Council under grant no.~Z2021-06594 while in residence at Institut Mittag-Leffler in Djursholm, Sweden,
and
the National Science Foundation under grant no.~DMS-1929284 while in residence at the Institute for Computational and Experimental Research in Mathematics in Providence, USA.
For the purpose of open access, the authors have applied a CC BY public copyright 
licence to any author accepted manuscript arising from this submission.}}}

\author{Kars Knook\thanks{Mathematical Institute, University of Oxford, UK
(\email{kars.knook@maths.ox.ac.uk}).}
\and Aaron Baier-Reinio\thanks{Mathematical Institute, University of Oxford, UK 
(\email{aaron.baier-reinio@maths.ox.ac.uk})}
\and Patrick E. Farrell\thanks{Mathematical Institute, University of Oxford, UK 
and Mathematical Institute, Faculty of Mathematics and Physics, Charles 
University, Czechia
 (\email{patrick.farrell@maths.ox.ac.uk}).}}

\usepackage{amsopn}

\ifpdf
\hypersetup{
pdftitle={Preconditioners for the Onsager--Stefan--Maxwell equations for 
multicomponent diffusion},
pdfauthor={K. Knook, A. Baier-Reinio P.E. Farrell}
}
\fi

\newsiamremark{assumption}{Assumption}
\crefname{assumption}{Assumption}{Assumptions}

\begin{document}

\maketitle

\begin{abstract}
The Onsager--Stefan--Maxwell (OSM) equations are an important model of mass 
transport in multicomponent flows with multiple chemical species. They describe 
the coupling of diffusive fluxes between species, accounting for their 
interactions through frictional and thermodynamic driving forces. In this work we 
propose an augmented Lagrangian preconditioner and prove its 
discretization-robustness for a Picard linearization of the stationary OSM 
equations in the isobaric, isothermal, ideal gaseous setting. For the Newton 
linearization we employ the augmented Lagrangian preconditioner as a block 
diagonal smoother inside a monolithic geometric multigrid iteration and combine 
with vertex star Schwarz methods. This strategy is shown to be applicable in a 
wide variety of settings which incorporate cross-diffusion, nonideal mixing, 
thermal, pressure, convective, and electrochemical effects. We demonstrate 
robustness or mild dependence with respect to mesh refinement and polynomial 
degree of the proposed monolithic preconditioning strategy for different types of 
multicomponent flows in several applications: cross-diffusion in the human 
airways, separation of gases under a temperature gradient, nonideal mixing of 
benzene and cyclohexane, and electrolytic transport in a Hull cell undergoing 
electroplating.
\end{abstract}

\begin{keywords}
Multicomponent flows, mixtures, Maxwell--Stefan, augmented Lagrangian 
preconditioners, monolithic multigrid preconditioners.
\end{keywords}

\begin{MSCcodes}
65N30, 76T30, 65N55. 
\end{MSCcodes}

\section{Introduction}
\label{sec:introduction}

Many scientific and industrial applications involve multicomponent fluids, those 
consisting of several distinct chemical species.
For example, air consists of nitrogen, oxygen, water vapour, carbon dioxide, and 
other gases. For some applications tracking the individual species is crucial to 
understanding the physical system, such as in the exchange of oxygen and carbon 
dioxide in the lungs. In this work we restrict our attention to the case where all 
species are in a common thermodynamic phase (i.e.~a single liquid or gas).

Many studies of multicomponent flows consider the dilute regime where one species, 
the solvent, is present in much greater quantities than the other species, the 
solutes. In this setting the interactions between the solutes can be neglected. 
The solutes decouple from each other and their diffusive fluxes can be computed 
using Fick's law \cite{fick1855ueber}, yielding decoupled advection-diffusion 
equations. The bulk flow of the mixture can be approximated by the flow of the 
solvent and can be modeled using, for example, the Stokes or Navier--Stokes 
equations. The dilute regime has been studied in great detail 
\cite{bird2002transport, cussler2009diffusion}.
By contrast, in the concentrated regime, where species are present in similar 
quantities, Fick's law is not appropriate as it only accounts for solvent-solute 
interactions. For example, Fick's law neglects cross-diffusion, where the 
diffusion of one species is driven by the concentration gradient of another 
\cite{krishna2019diffusing, krishna1997maxwell, wesselingh2000mass}. In this 
concentrated regime, the diffusive fluxes among species are coupled and can 
instead be modeled by the Onsager--Stefan--Maxwell (OSM) equations 
\cite{bird2002transport, krishna2019diffusing, krishna1997maxwell,
newman2021electrochemical, wesselingh2000mass}.

The OSM equations have applications in many fields including the chemical industry 
\cite{krishna1997maxwell}, combustion \cite{giovangigli2012multicomponent}, 
lithium-ion batteries \cite{richardson2022charge}, and food processing 
\cite{krishna1997maxwell}.
There are many extensions and variations of the OSM equations in the literature,
which can handle applications in, for example, porous media flows 
\cite{veldsink1995use}, multiphase problems \cite{huo2023existence}, and
fluctuating hydrodynamics \cite{balakrishnan2014fluctuating}.
In the present work, we consider variations of the OSM equations that model 
stationary, thermodynamically nonideal mixtures in a common thermodynamic phase.
Our developments herein are also applicable to problems in thermodiffusion
\cite{van2022consolidated} and electrochemistry \cite{van2023structural}.

Finite element discretizations for the OSM equations applied to isobaric, 
isothermal, ideal gaseous mixtures have been discussed in several works 
\cite{braukhoff2022entropy, mcleod2014mixed, van2022augmented}. We consider the 
high-order discretization recently proposed by a subset of the authors 
\cite{baierreinio2024highorderfiniteelementmethods} which models anisobaric 
multicomponent convection-diffusion by coupling the momentum balance of the Stokes 
equation to the OSM equations. This formulation solves for chemical potentials and 
mass fluxes, instead of species concentrations and velocities. Importantly, this 
discretization also extends to nonideal mixtures and condensed phases. Continuous 
well-posedness, discrete well-posedness and quasi-optimality are proven for a 
Picard linearization of the coupled Stokes and OSM equations in 
\cite{aznaran2024finite,baierreinio2024highorderfiniteelementmethods}.

Simulating multicomponent flows is inherently expensive. Each species is described 
by one or more fields (in our case, its mass flux and chemical potential), and 
there are additional fields that characterise bulk mass transport such as pressure 
and barycentric velocity. The dimension of the discretization needed to 
approximate all of these functions quickly exceeds the regime where sparse direct 
solvers for the linearized system are affordable, even for modest numbers of 
species and mesh sizes. Robust and efficient iterative solvers are therefore 
necessary for large-scale simulations.

In this work we propose an augmented Lagrangian (AL) preconditioner to solve a 
symmetric Picard linearization of the OSM equations for which continuous and 
discrete well-posedness and quasi-optimality can be proven in the isobaric, 
isothermal, ideal gaseous setting. In AL preconditioning 
\cite{benzi2006augmented}, the weak formulation is modified with a penalty term 
that gives more control over the Schur complement at the expense of a more 
ill-conditioned top-left block. As a result of the augmentation the Schur 
complement is well approximated by a block diagonal mass matrix. This 
approximation allows us to decouple the mass fluxes from the chemical potentials. 
Furthermore, we can approximate the top-left block with its block diagonal part 
obtaining a block diagonal preconditioner that fully decouples across fields and 
chemical species. We prove and numerically demonstrate discretization-robustness 
of this preconditioner.

However, in practice we use the Newton linearization because of its superior 
convergence and robustness. Since the Picard and Newton linearization share much 
of the same structure, we can use the AL preconditioner for the Newton 
linearization too. To obtain a scalable solution, the diagonal blocks in the 
augmented top-left block are solved effectively using geometric multigrid (GMG) 
with vertex or edge star iteration \cite{brubeck2024multigrid, Farrell_2021}. 
Additive Schwarz methods (ASM) \cite{smith1997domain, toselli2006domain} such as 
vertex/edge star iterations are a key ingredient to obtain robustness and 
parallelisability of preconditioners proposed in this work. A major benefit of the 
AL preconditioner leading to large computational savings is that it completely 
decouples across all fields and species, and thus smaller patches in the 
vertex/edge star iteration are required than if ASM are applied to all fields 
together.

Furthermore, to account for the nonsymmetric structure of the Newton 
linearization, we employ the scalable AL preconditioner as a block diagonal 
smoother for a monolithic GMG method 
\cite{bramble2019multigrid, trottenberg2000multigrid} instead. This monolithic 
approach with block diagonal smoother is more robust and allows us to easily 
include other variables of interest specific to the application, e.g.~mole 
fractions, pressure, and temperature, by scalably and robustly smoothing their 
corresponding diagonal blocks of the Jacobian.

To the best of our knowledge, this is the first work on preconditioners for mixed 
formulations of the OSM equations. The most closely related work is that of 
Giovangigli \cite{giovangigli1991convergent}, which considered a formulation where 
the mass fluxes are eliminated.

We first prove and demonstrate the robustness of the AL preconditioner for the 
Picard linearization of a test problem with a manufactured solution. Next, we 
consider the Newton linearization of the manufactured test problem.
In this setting we compare the performance of the monolithic GMG preconditioner 
with AL smoother to several alternatives, 
including the direct use of the AL preconditioner, and a monolithic GMG 
preconditioner with monolithic smoother.

We then demonstrate the discretization-robustness or mild dependence of our 
proposed preconditioner for the Newton linearization for a wide variety of 
examples encompassing nonideal mixing, thermal, pressure, convective and 
electrochemical effects. Specifically, we consider four test problems with varying 
physical phenomena. First, we consider cross-diffusion in human airways. This is a 
classical three-dimensional multicomponent diffusion problem of nitrogen, oxygen, 
water vapour, and carbon dioxide. Second, we consider a gas separation chamber, 
containing 
a ternary mixture of helium, argon, and krypton that separates under the influence 
of a heat flux. Here temperature is treated as a pseudo-species, pressure is an 
unknown constant, and integral constraints are required to make the problem with 
full Neumann boundary conditions well-posed. Third, we consider the nonideal 
mixing of benzene and cyclohexane, an industrial process that occurs e.g.~in the 
production of nylon. In our formulation for nonideal mixtures, both chemical 
potentials and mole fractions must be discretized. The bulk momentum transport and 
pressure are precomputed using the Stokes equations before solving the OSM 
equations. Finally, we consider electrolytic transport in a Hull cell. Here local 
electroneutrality places an algebraic constraint on the species concentrations, 
which we address using a structure-preserving change of basis, namely the 
salt-charge transformation of Van-Brunt et al.~\cite{van2023structural}.

This remainder of this work is organised as follows. The OSM equations are 
presented in \Cref{sec:OSM_equations}, and the AL preconditioner for the Picard 
linearization of the OSM equations restricted to isobaric, isothermal, ideal 
gaseous mixtures is presented in \Cref{sec:picard}. In \Cref{sec:newton} the 
monolithic GMG preconditioner with AL smoother for the Newton linearization of 
isobaric, isothermal, ideal gaseous mixtures is considered. This preconditioning 
strategy is then extended to anisothermal, nonideal, and electrolytic mixtures in 
Sections \ref{sec:anisothermal}, \ref{sec:nonideal}, and \ref{sec:electrolytic} 
respectively. We offer some conclusions in \Cref{sec:conclusions}.

\section{OSM equations}
\label{sec:OSM_equations}
In this section we present the OSM equations as formulated in 
\cite{baierreinio2024highorderfiniteelementmethods}.
The general form of the OSM equations for $n \geq 2$ species on a bounded 
Lipschitz domain $\Omega \subset \mathbb{R}^d$ with $d \in \{ 2,3 \}$ is
\begin{align}
d_i = \sum_{j=1, j \neq i}^n \frac{RTc_ic_j}{\mathscr{D}_{ij}c_T}\left( v_i - v_j 
\right) \qquad \forall i \in \{1, \dots, n \}.
\label{eq:OSM}
\end{align}
Here, $c_i: \Omega \rightarrow \mathbb{R}$ is the concentration of species $i$ in 
mol m\textsuperscript{$-d$}, $c_T:= \sum_{i=1}^n c_i$ is the total concentration, 
$d_i: \Omega \rightarrow \mathbb{R}^d$ is the diffusion driving force of species 
$i$ in J m\textsuperscript{$-(d+1)$}, $\mathscr{D}_{ij}$ is the Stefan--Maxwell 
diffusivity of species $i$ with respect to species $j$ in
m\textsuperscript{$2$} s\textsuperscript{$-1$}
(only defined for $i \neq j$, and symmetric, so 
$\mathscr{D}_{ij}=\mathscr{D}_{ji}$), $R$ is the ideal gas constant in J 
K\textsuperscript{$-1$} mol\textsuperscript{$-1$}, $T: \Omega \rightarrow 
\mathbb{R}$ is the absolute temperature in K, and $v_i: \Omega \rightarrow 
\mathbb{R}^d$ is the velocity of species $i$ in m s\textsuperscript{$-1$}. The OSM 
equations can be written in shorthand as
\begin{align}
 d_i = \sum_{j=1}^n \bm{M}_{ij}v_j \qquad \forall i \in \{1, \dots, n \},
 \label{eq:OSM_simplified}
\end{align}
where $\bm{M}$ is Onsager's transport matrix, given by
\begin{align}
\bm{M}_{ij} = \begin{cases}
 - \frac{RTc_ic_j}{\mathscr{D}_{ij}c_T}, & \text{if $i \neq j$,} \\
 \sum_{k=1,k \neq i}^n \frac{RTc_ic_k}{\mathscr{D}_{ik}c_T}, & \text{if $i =j$,} 
 \label{eq:onsager_transport_matrix}
\end{cases}
\end{align}
in J s m\textsuperscript{$-(d+2)$}.

The matrix $\bm{M}$ is symmetric positive semi-definite 
\cite{onsager1945theories, van2022augmented}; its positive semi-definiteness
ensures non-negativity of entropy production. This property is also useful for
proving the continuous and discrete well-posedness of a Picard linearized OSM 
problem
\cite{aznaran2024finite,baierreinio2024highorderfiniteelementmethods,van2022augmented}.
Note that the right-hand side of the OSM equations is invariant under a common 
velocity shift, i.e.~$v_i + v$ for all $i$ still solves \eqref{eq:OSM} for 
arbitrary $v: \Omega \rightarrow \mathbb{R}^d$. Physically, this invariance 
encodes the distinction between diffusion (which generates entropy) and convection 
(which does not). Correspondingly, one readily verifies that $\bm{M}$ has a
one-dimensional nullspace spanned by $[1, \ldots, 1]^{\top}$, provided that
all concentrations are strictly positive, which we assume throughout this work.
Fortunately, the left-hand 
side of \cref{eq:OSM_simplified} necessarily lies in the range of $\bm{M}$
(i.e.~$\sum_{i=1}^n d_i = 0$), owing to the Gibbs--Duhem equation.
To fix the component of 
the solution in the nullspace of $\bm{M}$,
it is typical to specify the bulk velocity 
$v_{\text{bulk}}: \Omega \rightarrow \mathbb{R}^d$. The bulk velocity is related 
to the unknown species velocities by the so-called mass-average constraint
\begin{align}
 v_{\text{bulk}} = \frac{\sum_{i=1}^n M_i c_iv_i}{\rho},
 \label{eq:vbulk}
\end{align}
where $M_i>0$ is the molar mass of species $i$ in kg mol\textsuperscript{$-1$}, 
and $\rho := \sum_{i=1}^n M_ic_i$ is the density in kg m\textsuperscript{$-d$}. In 
the context of the standalone OSM equations, the bulk velocity is treated as given 
fixed data (often assumed to be zero). In formulations where momentum transport
is fully coupled to the OSM problem
\cite{aznaran2024finite,baierreinio2024highorderfiniteelementmethods},
the bulk velocity is solved for with a Cauchy momentum equation. 

Furthermore, the 
OSM equations are coupled to mass continuity equations of the form
\begin{align}
 \frac{\partial c_i}{\partial t} + \nabla \cdot (c_iv_i) = r_i \qquad \forall i 
 \in \{ 1, \dots, n\},
\end{align}
where $r_i: \Omega \rightarrow \mathbb{R}$, the rates of generation or depletion 
of species $i$ in mol m\textsuperscript{$-d$} s\textsuperscript{$-1$}, are given. 
In this work we will consider the steady-state problem, so that the continuity 
equations become
\begin{align}
 \nabla \cdot (c_i v_i) = r_i \qquad \forall i \in \{ 1, \dots, n\}.
 \label{eq:cont}
\end{align}

Combining \eqref{eq:OSM_simplified}, \eqref{eq:vbulk}, and \eqref{eq:cont}, we 
obtain the steady-state OSM equations. We will use bar notation to denote 
$n$-tuples, e.g.~$\bar{r} = (r_1, \dots, r_n)$. For given data $v_{\text{bulk}}$ 
and $\bar{r}$, we consider the problem of finding $\bar{c}$ and 
$\bar{v}$ such that:
\begin{subequations}
 \begin{alignat}{2}
     d_i &= \sum_{j=1}^n \bm{M}_{ij}v_j \qquad &&\forall i \in \{1, \dots, n \}, 
     \label{eq:ideal_OSMa} \\
     \nabla \cdot (c_i v_i) &= r_i \qquad &&\forall i \in \{ 1, \dots, 
     n\},\label{eq:ideal_OSMb} \\
     v_{\text{bulk}} &= \frac{\sum_{i=1}^n M_i c_iv_i}{\rho}. \label{eq:ideal_OSMc}
 \end{alignat}
 \end{subequations}
The diffusion driving forces $\bar{d}$ vary from problem to problem.
A typical example for isothermal mixtures is
\begin{align}
 d_i = - c_i\left ( \nabla \mu_i - \frac{M_i}{\rho}\nabla p \right ) 
 \qquad \forall i \in \{ 1, \dots, n\},
 \label{eq:diffusion_driving_forces}
\end{align}
where $\mu_i: \Omega \rightarrow \mathbb{R}$ is the chemical potential of species 
$i$ in J mol\textsuperscript{$-1$} and $p: \Omega \rightarrow \mathbb{R}$ is the 
thermodynamic pressure in J m\textsuperscript{$-d$}.
The chemical potentials are defined by a thermodynamic constitutive law as a 
function of $T$, $p$ and the mole fractions $\bar{\chi}$ which are defined as 
$\chi_i := \frac{c_i}{c_T}$ for each $i$.

Next, following
\cite{aznaran2024finite,baierreinio2024highorderfiniteelementmethods},
we adopt the augmentation strategy from
\cite{van2022augmented}, which was
originally described in \cite{ern1994multicomponent, helfand1960inversion}, to 
obtain
\begin{align}
\label{eq:augmented_OSM}
 d_i + \gamma \omega_i v_{\text{bulk}}= \sum_{j=1}^n \bm{M}_{ij}^\gamma v_j \qquad 
 &\forall i \in \{1, \dots, n \},
\end{align}
by adding \eqref{eq:ideal_OSMc} to \eqref{eq:ideal_OSMa} weighted by $\gamma 
\omega_i$ where $\gamma> 0$ is an augmentation parameter and $\omega_i := 
\frac{M_ic_i}{\rho}$ is the mass fraction of species $i$. The matrices $\bm{M}$ 
and 
$\bm{M}^\gamma$ are related by the identity $\bm{M}^\gamma_{ij} := \bm{M}_{ij} + 
\gamma \omega_i \omega_j$. The augmentation strategy is used to enforce the mass-average constraint. It replaces the symmetric 
positive semi-definite $\bm{M}$ with the uniformly symmetric positive definite 
$\bm{M}^\gamma$, i.e.~there exists a positive lower bound for the smallest 
eigenvalue of $\bm{M}^\gamma$ that holds at all points in $\Omega$,
assuming the species concentrations are bounded away from zero.
This is used
to prove continuous and discrete well-posedness and quasi-optimality of the 
Picard linearization in \Cref{sec:picard}. In practice, we will choose $\gamma =1$ 
for nondimensionalized problems as that has shown the smallest solver times 
\cite{van2022augmented}, and corresponds to enforcing the OSM equation and the 
mass-average constraint equally.

Lastly, we write the OSM equations in terms of the mass fluxes $J_i: \Omega 
\rightarrow \mathbb{R}^d$ instead of the velocities $v_i$ for each $i$. If the 
mass fluxes are discretized instead of the velocities, divergence-conforming 
symmetric stress elements for the Newtonian stress tensor are not needed in the 
full coupling of the OSM equations with the Navier--Stokes equations 
\cite{baierreinio2024highorderfiniteelementmethods}. The mass fluxes $\bar{J}$ are 
given by
\begin{align}
 J_i := M_ic_iv_i \qquad &\forall i \in \{1, \dots, n \}.
\end{align}
For symmetry of the forthcoming linearized problem we shall divide 
\eqref{eq:augmented_OSM} by $M_ic_i$, which yields
\begin{subequations} \label{eq:augmented_OSM__}
\begin{alignat}{2}
 \frac{d_i}{M_ic_i} + \frac{\gamma}{\rho} v_{\text{bulk}} &= \sum_{j=1}^n 
 \widetilde{\bm{M}}_{ij}^\gamma J_j \qquad &&\forall i \in \{1, \dots, n \}, 
 \label{eq:augmented_OSMa_} \\
 \frac{1}{M_i} \nabla \cdot J_i &= r_i \qquad &&\forall i \in \{ 1, \dots, n\}, 
 \label{eq:augmented_OSMb_}
\end{alignat}
\end{subequations}
where $\widetilde{\bm{M}}^\gamma_{ij}:=\frac{\bm{M}_{ij}^\gamma}{M_iM_jc_ic_j}$.

\section{Picard linearization for isobaric, isothermal, ideal gaseous mixtures}
\label{sec:picard}

In an isobaric and isothermal mixture the diffusion driving forces simplify to
\begin{align}
 d_i = -c_i \nabla \mu_i \qquad \forall i \in \{ 1, \dots, n\}.
\end{align}
If we further assume an ideal gaseous mixture,
the Stefan--Maxwell diffusivities $\mathscr{D}_{ij}$ for $i \neq j$
are strictly positive constants.
Ideal gaseous mixtures also satisfy the
following thermodynamic constitutive law:
\begin{align}
\label{eq:ideal_constitutive_law}
 \mu_i = RT \log \left ( c_i \frac{RT}{p^{\text{ref}}}\right ) + 
 \mu_i^{\text{ref}} \qquad \forall i \in \{ 1, \dots, n\},
\end{align}
where $p^{\text{ref}}$, the reference pressure, and $\mu_i^{\text{ref}}$, the 
reference chemical potential of species $i$, are given constants.
Since we have an explicit 
relation between $\bar{\mu}$ and $\bar{c}$, we choose to solve for $\bar{\mu}$ 
instead. Note that this construction gives the added benefit that $c_i > 0$ for 
each $i$, if $c_i$ is computed using $\mu_i$. The problem then becomes: for given 
$v_{\text{bulk}}$, $\gamma>0$, and 
$\bar{r}$, find $\bar{\mu}$ and $\bar{J}$ such that
\begin{subequations} \label{eq:augmented_OSM_}
\begin{alignat}{2}
 -\frac{1}{M_i}\nabla \mu_i + \frac{\gamma}{\rho} v_{\text{bulk}} &= \sum_{j=1}^n 
 \widetilde{\bm{M}}_{ij}^\gamma J_j \qquad &&\forall i \in \{1, \dots, n \}, 
 \label{eq:augmented_OSMa} \\
 \frac{1}{M_i} \nabla \cdot J_i &= r_i \qquad &&\forall i \in \{ 1, \dots, n\}, 
 \label{eq:augmented_OSMb}
\end{alignat}
\end{subequations}
where $\bar{c}$ is given by inverting \eqref{eq:ideal_constitutive_law}.

The system is closed with the boundary conditions
\begin{subequations} \label{eq:bcs}
 \begin{alignat}{3} 
J_{i}\cdot \hat{n} &= g_{i} \in L^2(\Gamma_{N_i}) \qquad && \text{on} \; 
\Gamma_{N_i} \qquad &\forall i \in \{1, \dots, n \}, \label{eq:bc_J} \\
\mu_i &= f_{i} \in H^{\frac{1}{2}}(\Gamma_{D_i}) \qquad && \text{on} \; 
\Gamma_{D_i} \qquad &\forall i \in \{1, \dots, n \}, \label{eq:bc_mu}
 \end{alignat}
\end{subequations}
where $\Gamma_{N_i}$ and $\Gamma_{D_i}$ partition $\partial \Omega$ for all $i$, 
and $\hat{n}$ denotes the outward unit normal.

In the remainder of this section we study a Picard linearization of 
\eqref{eq:augmented_OSM_} in which $\bar{c}$ (and hence $\rho$ and 
$\widetilde{\bm{M}}_{ij}^\gamma$) are treated as fixed. This makes 
\eqref{eq:augmented_OSM_} linear in the variables $\bar{\mu}$ and $\bar{J}$. We 
obtain a fixed point iteration by alternating solves for $\bar{\mu}$ and 
$\bar{J}$, and direct evaluation of $\bar{c}$, $\rho$ and 
$\widetilde{\bm{M}}_{ij}^\gamma$. More concretely, let $\bar{\mu}^l$ and $\bar{J}^l$ 
be the $l^{\mathrm{th}}$ solution iterate of the fixed point iteration. Then 
$\bar{c}^l$, $\rho^l$ and $\widetilde{\bm{M}}_{ij}^{\gamma,l}$ can be found through 
direct evaluation using $\bar{\mu}^l$. To obtain $\bar{\mu}^{l+1}$ and 
$\bar{J}^{l+1}$, we solve the Picard linearization: for given $v_{\text{bulk}}$, 
$\gamma>0$, $\bar{r}$, $\rho^l$ and $\widetilde{\bm{M}}_{ij}^{\gamma,l}$, find 
$\bar{\mu}^{l+1}$ and $\bar{J}^{l+1}$ such that
\begin{subequations} \label{eq:augmented_OSM_picard}
\begin{alignat}{2}
 -\frac{1}{M_i}\nabla \mu_i^{l+1} + \frac{\gamma}{\rho^l} v_{\text{bulk}} &= 
 \sum_{j=1}^n \widetilde{\bm{M}}_{ij}^{\gamma,l} J_j^{l+1} \qquad &&\forall i \in \{1, 
 \dots, n \}, \label{eq:augmented_OSMa_picard} \\
 \frac{1}{M_i} \nabla \cdot J_i^{l+1} &= r_i \qquad &&\forall i \in \{ 1, \dots, 
 n\}.
 \label{eq:augmented_OSMb_picard}
\end{alignat}
\end{subequations}
We will first derive a weak formulation for this Picard linearization and prove 
its well-posedness, then we will show well-posedness and quasi-optimality for a 
finite element discretization, and finally we will present and numerically 
demonstrate a provably robust preconditioner with respect to mesh size and finite 
element degree.
The arguments and proofs in \Cref{sec:weak_form_picard,sec:fem_picard} are 
simplifications of those given in 
\cite{baierreinio2024highorderfiniteelementmethods}, which consider the equations 
coupled to an additional momentum balance.

It is worth discussing the role of $T$ and $p$ in this isothermal, isobaric, ideal 
gaseous setting.
The situation regarding $T$ is straightforward; $T$ can be any specified positive constant. The situation regarding $p$ is more subtle.
As we are assuming an isobaric mixture,
the pressure $p \in \mathbb{R}$ is a constant.
A na\"ive inspection of the closed system
\eqref{eq:ideal_constitutive_law}--\eqref{eq:bcs}
would (incorrectly) appear to suggest that the value of $p$ is physically 
irrelevant, since $p$ does not explicitly appear anywhere in
\eqref{eq:ideal_constitutive_law}--\eqref{eq:bcs}.
However, thermodynamic consistency requires that the concentrations
satisfy the ideal gas volumetric equation of state
\begin{equation}
	\label{eq:state}
	p = c_T R T.
\end{equation}
In particular, if $\cap_{i=1}^n \Gamma_{D_i} \neq \emptyset$ then one can
compute $p$ on $\cap_{i=1}^n \Gamma_{D_i} \subset \partial \Omega$
using the equation of state \cref{eq:state}
together with the constitutive law \cref{eq:ideal_constitutive_law} and
boundary conditions \cref{eq:bc_mu}. The Dirichlet data $\{ f_i \}_{i=1}^n$ must
then be consistent with the requirement that $p$ is a constant, and in this case
the value of $p$ is evidently a known quantity.
On the other hand, if $\cap_{i=1}^n \Gamma_{D_i} = \emptyset$ then $p$
cannot be determined using the boundary data, and in this case
extra integral constraints are typically necessary to close the system.
An example of this case is discussed in detail in \Cref{sec:anisothermal}.

\subsection{Weak formulation and continuous well-posedness}
\label{sec:weak_form_picard}
A weak formulation of \eqref{eq:augmented_OSM_picard} is derived by multiplying 
\eqref{eq:augmented_OSMa_picard} with test functions $\tau_i \in H(\text{div}; \Omega)$ 
and integrating by parts, and multiplying  \eqref{eq:augmented_OSMb_picard} with 
test functions $w_i \in L^2(\Omega)$. For simplicity, we assume full 
homogeneous Neumann boundary conditions, in which case it is appropriate to 
seek $\mu_i^{l+1} \in L^2_0(\Omega) 
:= \{w \in L^2(\Omega): \int_\Omega w \; \mathrm{d}x=0 \}$ for all $i$.
To state the weak formulation of the Picard linearization, we define
\begin{subequations}
\begin{align}
\label{eq:saddlepoint_a}
 a^l(\bar{J}, \bar{\tau}) &:= \sum_{i,j=1}^n \left ( 
 \widetilde{\bm{M}}^{\gamma,l}_{ij}J_j, \tau_i \right )_\Omega , \\
\label{eq:saddlepoint_b}
 b(\bar{J}, \bar{w}) &:= -\sum_{i=1}^n \left ( \frac{1}{M_i}\nabla \cdot J_i, w_i 
 \right )_\Omega ,
\end{align}
\end{subequations}
where $\left (\cdot, \cdot \right )_\Omega$ denotes the $L^2(\Omega)$-inner 
product. The weak formulation for the Picard linearization then becomes: for given
$\gamma>0$,
$\widetilde{\bm{M}}^{\gamma,l}$,
$v_{\text{bulk}} \in L^2(\Omega)^d$,
$\bar{r} \in L^2(\Omega)^n$ and
$\rho^l \in L^{\infty}(\Omega)$,
find $J_i^{l+1} \in H_0(\text{div}; \Omega) := 
\{\sigma \in H(\text{div}; \Omega) : 
\sigma \cdot \hat{n} |_{\partial \Omega} = 0 \}$ and $\mu_i^{l+1} \in L^2_0(\Omega)$ such 
that
\begin{subequations}
\label{eq:picard}
\begin{alignat}{3}
 a^l(\bar{J}^{l+1}, \bar{\tau}) &+ b(\bar{\tau}, \bar{\mu}^{l+1}) &&= \left 
 (\frac{\gamma}{\rho^l} v_\text{bulk}, \sum_{i=1}^n \tau_i \right )_\Omega, \\
 b(\bar{J}^{l+1}, \bar{w}) & &&= - \sum_{i=1}^n (r_i, w_i)_\Omega,
\end{alignat}
\end{subequations}
for all $\tau_i \in H_0(\text{div}; \Omega)$ and $w_i \in L^2_0(\Omega)$ for each $i\in \{1, \dots, n \}$. 

Note that \eqref{eq:picard} has a symmetric saddle-point structure for the pair 
$(\bar{J}^{l+1}, \bar{\mu}^{l+1})$. Thus continuous well-posedness can be 
proved by showing, in the appropriate norms, the inf-sup stability of $b$, and 
coercivity of $a^l$ on the kernel of $b$ \cite[Theorem 4.2.1]{boffi2013mixed}.

\begin{lemma} \label{lemma:picard_wp}
 If $c_i^l \in L^\infty(\Omega)$ and $c_i^l \geq c_{\mathrm{min}}$ a.e.~in 
 $\Omega$ for each $i$ where $c_\mathrm{min}>0$ is a constant, 
 and $\mathscr{D}_{ij} > 0$ is a constant for each $i \neq j$,
 then
 the Picard linearization in \eqref{eq:picard} is well-posed.
\end{lemma}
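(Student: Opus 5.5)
We apply the Brezzi theory cited just above \cite[Theorem 4.2.1]{boffi2013mixed}. Take $\bar{\Sigma} := H_0(\text{div};\Omega)^n$ with the product $H(\text{div})$ norm and $\bar{W} := L^2_0(\Omega)^n$ with the product $L^2$ norm. We then verify four things: (i) boundedness of $a^l$, $b$ and the right-hand side functionals; (ii) coercivity of $a^l$ on $\ker b$; (iii) the inf-sup condition for $b$; and (iv) compatibility of the data.

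\textbf{Boundedness.} The entries $\widetilde{\bm{M}}^{\gamma,l}_{ij} = \bm{M}^{\gamma,l}_{ij}/(M_iM_jc_i^lc_j^l)$ lie in $L^\infty(\Omega)$. Indeed, $\bm{M}_{ij}$ is built from $RTc_ic_j/(\mathscr{D}_{ij}c_T)$ with $c_i,c_j\in L^\infty$, $c_T\ge nc_{\min}$ and constant $\mathscr{D}_{ij}>0$, while $\omega_i\le 1$. Dividing by $c_ic_j\ge c_{\min}^2$ keeps everything bounded. Hence $|a^l(\bar J,\bar\tau)|\lesssim \|\bar J\|_{L^2}\|\bar\tau\|_{L^2}$. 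Boundedness of $b$ is immediate. The right-hand sides are bounded because $\rho^l\ge c_{\min}\min_i M_i>0$, so $1/\rho^l\in L^\infty$.

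\textbf{Inf-sup for $b$.} The form $b$ decouples across species, each block being $-(M_i^{-1}\nabla\cdot J_i,w_i)$. It therefore suffices to use the standard scalar result: for each $w_i\in L^2_0(\Omega)$ there is $\tau_i\in H_0(\text{div};\Omega)$ (indeed in $H^1_0(\Omega)^d$) with $-\nabla\cdot\tau_i = w_i$ and $\|\tau_i\|_{H(\text{div})}\lesssim\|w_i\|_{L^2}$. This follows from the Bogovski\u{\i} operator on the Lipschitz domain, the zero-mean condition being exactly what is needed. We then rescale by $M_i$ and sum over $i$ to obtain the inf-sup constant.

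\textbf{Coercivity on the kernel.} On $\ker b$, each $\nabla\cdot J_i$ is orthogonal to $L^2_0$. Since $J_i\cdot\hat n=0$ on $\partial\Omega$, the divergence theorem gives $\int_\Omega\nabla\cdot J_i=0$, so $\nabla\cdot J_i=0$ and the $H(\text{div})$ norm reduces to the $L^2$ norm. It then suffices to prove the $L^2$-coercivity
\[
a^l(\bar J,\bar J)\ge \alpha\|\bar J\|_{L^2}^2 .
\]
This is the main obstacle: we need a uniform positive lower bound on the smallest eigenvalue of $\widetilde{\bm{M}}^{\gamma,l}$ a.e. in $\Omega$.

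To obtain this bound, write $\widetilde{\bm{M}}^{\gamma} = D\bm{M}^\gamma D$ with $D=\mathrm{diag}(1/(M_ic_i))$, whose entries are bounded above by $1/(c_{\min}\min_iM_i)$. For $\xi\in\mathbb{R}^n$ set $z=D\xi$, so that $\xi^\top\widetilde{\bm{M}}^\gamma\xi = z^\top\bm{M}^\gamma z$. Since $\bm{M}$ is a weighted graph Laplacian,
\[
z^\top \bm M z=\sum_{i<j}\frac{RTc_ic_j}{\mathscr D_{ij}c_T}(z_i-z_j)^2 \ge \kappa\sum_{i<j}(z_i-z_j)^2 ,
\]
where $\kappa=RTc_{\min}^2/(\max_{ij}\mathscr D_{ij}\, n\|c\|_{L^\infty})$. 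The augmentation adds $\gamma(\sum_i\omega_iz_i)^2$. Decompose $z = \bar z\,\mathbf 1 + z^\perp$ with $z^\perp\perp\mathbf 1$. The Laplacian term controls $\kappa n|z^\perp|^2$. Because $\sum_i\omega_i=1$, we have $\sum_i\omega_iz_i=\bar z+\sum_i\omega_iz^\perp_i$, and $|\sum_i\omega_iz^\perp_i|\le|z^\perp|$. Young's inequality then gives a uniform bound $z^\top\bm M^\gamma z\ge\beta|z|^2$ with $\beta>0$ depending only on $\gamma$, $\kappa$ and $n$. Finally $|z|\ge|\xi|/(\max_iM_i\|c\|_{L^\infty})$, which yields pointwise uniform positive definiteness; integrating gives $\alpha$.

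\textbf{Conclusion.} The Brezzi theorem yields existence, uniqueness and stability of $(\bar J^{l+1},\bar\mu^{l+1})$. The hypothesis $c_i^l\ge c_{\min}$ enters in all three places: in boundedness through $1/(c_ic_j)$ and $1/\rho$, and in coercivity through $\kappa$.
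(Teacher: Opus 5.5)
Your proof is correct and follows the route the paper indicates (Brezzi's theorem via inf-sup stability of $b$ and coercivity of $a^l$ on its kernel, with details deferred to \cite[Section 2.6]{baierreinio2024highorderfiniteelementmethods}); the one difference is that you prove the uniform positive definiteness of $\widetilde{\bm{M}}^{\gamma,l}$ directly from the graph-Laplacian structure of $\bm{M}$ plus the augmentation term, whereas the paper relies on \cite[Lemma 3.3]{aznaran2024finite} together with Ostrowski's theorem \cite{ostrowski1959quantitative}. Your announced step (iv) on data compatibility is never carried out, but it is not needed: since the multiplier space is $L^2_0(\Omega)^n$, the functional $\bar w \mapsto -\sum_{i=1}^n (r_i, w_i)_\Omega$ is bounded for every $\bar r \in L^2(\Omega)^n$, so Brezzi's theorem applies as it stands.
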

\begin{proof}
 The proof follows from simplifying the arguments in \cite[Section 
 2.6]{baierreinio2024highorderfiniteelementmethods}.
\end{proof}

\subsection{Finite element discretization, discrete well-posedness and 
quasi-optimality}
\label{sec:fem_picard}
Let $\mathcal{T}_h$ be a shape-regular triangulation of $\Omega$ with maximum cell 
diameter $h$. If conforming finite element spaces are employed, and if we have 
discrete inf-sup stability and coercivity, then the 
discrete Picard linearization is well-posed and quasi-optimal \cite[Theorem 
5.2.2]{boffi2013mixed}. If we employ a conforming, divergence-free, and inf-sup 
stable finite element pair for $H(\text{div}; \Omega)$ and $L^2(\Omega)$, we automatically obtain 
discrete inf-sup stability and coercivity, so discrete well-posedness and 
quasi-optimality of the Picard linearization follow. Commonly used conforming 
finite elements for the $H(\text{div}; \Omega)$ and $L^2(\Omega)$ spaces are Raviart--Thomas (RT) 
or Brezzi--Douglas--Marini (BDM),
paired with discontinuous Lagrange (DG) elements. These 
finite element spaces are widely implemented for arbitrary degree $k$. We will use 
$\text{RT}_k$ and $\text{DG}_{k-1}$ spaces as these are the smallest spaces guaranteeing order $k$ error convergence rates in the equipped norms.

\subsection{Preconditioning}
\label{sec:preconditioning}
We achieve robust solves with respect to $h$ and $k$ using an augmented Lagrangian 
(AL) preconditioning strategy. By adding suitable terms to the equations in
\eqref{eq:picard}, the AL preconditioning strategy 
gains more control over the Schur complement \cite{benzi2006augmented}, allowing a 
robust decoupling of $\bar{J}^{l+1}$ and $\bar{\mu}^{l+1}$. We consider augmented 
Lagrangian terms of the form
\begin{subequations}
\label{eq:augmented_lagrangian_term}
\begin{align}
\label{eq:augmented_lagrangian_term_c}
 c^l(\bar{J}, \bar{\tau}) &= \sum_{i=1}^n\kappa^l_i \left(\frac{1}{M_i}\nabla 
 \cdot J_i, \frac{1}{M_i}\nabla \cdot \tau_i \right )_\Omega, \\
 d^l(\bar{\tau}) &= \sum_{i=1}^n\kappa^l_i \left(r_i, \frac{1}{M_i}\nabla \cdot 
 \tau_i \right )_\Omega,
\end{align}
\end{subequations}
where $\kappa^l_i > 0$ for each $i$. We add \eqref{eq:augmented_lagrangian_term} 
to \eqref{eq:picard}, yielding
\begin{subequations}
\label{eq:augmented_lagrangian}
\begin{align}
 a^l(\bar{J}^{l+1}, \bar{\tau}) + c^l(\bar{J}^{l+1}, \bar{\tau}) + b(\bar{\tau}, 
 \bar{\mu}^{l+1}) &= \left (\frac{\gamma}{\rho^l} v_\text{bulk}, \sum_{i=1}^n 
 \tau_i \right )_\Omega + d^l(\bar{\tau}), \\
 b(\bar{J}^{l+1}, \bar{w}) &= - \sum_{i=1}^n (r_i, w_i)_\Omega .
\end{align}
\end{subequations}
Since $\frac{1}{M_i}\nabla \cdot J_i^l = r_i$ for all $i$, the augmented 
Lagrangian 
terms cancel when evaluated at the exact solution $\bar{J}^{l+1}$, and hence they 
do not change the solution of the saddle-point problem. At the discrete level this also holds because $r_i$ is only ever tested against functions in $\text{DG}_{k-1}$, so we can replace $r_i$ with its $L^2(\Omega)$-projection
into $\text{DG}_{k-1}$. Now the discrete mass continuity equation is exactly satisfiable, and the solution of the discrete saddle-point problem is retained.

Let $A^l$ and $B$ denote the linear operators corresponding to \eqref{eq:saddlepoint_a} and \eqref{eq:saddlepoint_b} 
respectively. By construction the operator corresponding to 
\eqref{eq:augmented_lagrangian_term_c} is $\mathrm{diag}(\bar{\kappa}^l) 
B^TM^{-1}_pB$ where $\mathrm{diag}(\bar{\kappa}^l)$ is the operator that scales 
the $[B^TM^{-1}_pB]_{ii}$ operator by $\kappa^l_i$ for each $i$ and $M_p$ is the 
linear operator corresponding to $\sum_{i=1}^n (\mu_i, w_i)_\Omega$. Using this 
notation we can write down the linear operator corresponding to the left-hand 
side of \eqref{eq:augmented_lagrangian} as
\begin{equation}
\bm{J}_P = 
 \begin{pmatrix}
     A^l + \mathrm{diag}(\bar{\kappa}^l) B^TM^{-1}_pB &  B^T \\
      B &  \\
 \end{pmatrix}.
 \label{eq:AL_system}
\end{equation}
To obtain an approximation to the Schur complement $S^l$, we observe that \newline 
$\mathrm{diag}(\bar{\kappa}^l) B^TM^{-1}_pB$ will dominate $A^l$ for large 
$\kappa^l_i$ for each $i$. Hence, if we ignore $A^l$ and let $w_i := 
\frac{\kappa^l_i}{M_i}\nabla \cdot \tau_i$ for each $i$, the following Schur 
complement approximation can be derived by eliminating $\bar{\tau}$:
\begin{align}
 \tilde{s}^l(\bar{\mu}^{l+1}, \bar{w}) := -\sum_{i=1}^n \frac{1}{\kappa^l_i} 
 \left (\mu^{l+1}_i, w_i \right )_\Omega.
 \label{eq:approx_schur_complement}
\end{align}
which gives arbitrarily fast convergence as a preconditioner for the Schur complement as $\bar{\kappa}^l_i \rightarrow \infty$ for each $i$.
Let $\tilde{S}^l$ denote the linear operator corresponding to 
\eqref{eq:approx_schur_complement}. This gives rise to the following block 
diagonal preconditioner for $\bm{J}_P$
\begin{align}
 \begin{pmatrix}
     A^l + \mathrm{diag}(\bar{\kappa}^l) B^TM^{-1}_pB &  \\
      & \tilde{S}^l \\
 \end{pmatrix},
 \label{eq:AL_preconditioner}
\end{align}
which results in at most 3 preconditioned GMRES iterations as $\bar{\kappa}^l_i \rightarrow \infty$ for each $i$ \cite{murphy2000note}.

Now note that $\mathrm{diag}(\bar{\kappa}^l) B^TM^{-1}_pB$ and $\tilde{S}^l$ do 
not 
induce any coupling between species. Therefore, we choose to exclude the 
off-diagonal contributions of $A^l$ and obtain the following block diagonal 
preconditioner
\begin{align}
 P = \begin{pmatrix}
     A^l_1 + \kappa^l_1 B_1^TM^{-1}_{p,1}B_1 \\
     & \ddots \\
     & & A^l_n + \kappa^l_n B_n^TM^{-1}_{p,n}B_n \\
     & & & \tilde{S}_1^l \\
     & & & & \ddots \\
     & & & & & \tilde{S}_n^l \\
 \end{pmatrix},
 \label{eq:AL_preconditioner_decoupled}
\end{align}
where $A^l_i + \kappa^l_i B_i^TM^{-1}_{p,i}B_i$ and $\tilde{S}^l_i$ are the 
linear operators corresponding to \newline
$\left (\widetilde{\bm{M}}_{ii}^{\gamma,l}J^{l+1}_{i} , \tau_{i} \right )_\Omega + 
\frac{\kappa^l_i}{M_i^2} \left(\nabla \cdot J_i^{l+1}, \nabla \cdot \tau_i \right 
)_\Omega$ and $-\frac{1}{\kappa^l_i}\left ( \mu_{i}^{l+1}, w_{i} \right )_\Omega$ 
respectively. To prove $P$ is a discretization-robust preconditioner for $\bm{J}_P$, it suffices to
show spectral equivalence of \eqref{eq:saddlepoint_a} with
\begin{equation}
 a_\mathrm{diag}^l(\bar{J}, \bar{\tau}) = \sum_{i=1}^n \left 
 (\widetilde{\bm{M}}_{ii}^{\gamma,l}J_{i} , \tau_{i} \right )_\Omega,
\end{equation}
since the augmented Lagrangian term is positive semidefinite.
This is done in the following lemma.
\begin{lemma}
\label{lemma:spectral_equivalence}
Under the same assumptions as \Cref{lemma:picard_wp},
the bilinear forms $a^l$ and $a^l_\mathrm{diag}$ are spectrally equivalent.
\end{lemma}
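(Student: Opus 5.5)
Our plan is to prove two-sided bounds $C_1\, a^l_\mathrm{diag}(\bar{\tau},\bar{\tau}) \le a^l(\bar{\tau},\bar{\tau}) \le C_2\, a^l_\mathrm{diag}(\bar{\tau},\bar{\tau})$ for all $\bar{\tau}\in H_0(\mathrm{div};\Omega)^n$. The constants $C_1,C_2>0$ will depend only on $n$, $c_\mathrm{min}$, $\max_i\|c_i^l\|_{L^\infty}$, the molar masses, $T$, $\gamma$ and the diffusivities $\mathscr{D}_{ij}$, and not on $h$ or $k$. Both forms are integrals of pointwise quadratic forms in the vector $(\tau_1(x),\dots,\tau_n(x))$, and each spatial component decouples. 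The statement therefore reduces to a pointwise matrix inequality: for a.e.\ $x\in\Omega$ and all $\xi\in\mathbb{R}^n$,
\begin{equation*}
C_1\, \xi^\top \bm{D}(x)\xi \le \xi^\top \widetilde{\bm{M}}^{\gamma,l}(x)\xi \le C_2\, \xi^\top \bm{D}(x)\xi,
\qquad \bm{D}:=\mathrm{diag}\big(\widetilde{\bm{M}}^{\gamma,l}_{11},\dots,\widetilde{\bm{M}}^{\gamma,l}_{nn}\big),
\end{equation*}
with $C_1,C_2$ uniform in $x$. Integrating over $\Omega$ and summing over the $d$ components then gives the lemma.

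For the pointwise inequality we first write $\widetilde{\bm{M}}^{\gamma,l}=\bm{\Lambda}\bm{M}^{\gamma,l}\bm{\Lambda}$ with $\bm{\Lambda}=\mathrm{diag}(1/(M_ic_i^l))$. By the assumptions $c_\mathrm{min}\le c_i^l\le c_\mathrm{max}:=\max_i\|c_i^l\|_{L^\infty}$, the entries of $\bm{\Lambda}$ lie uniformly between two positive constants. It therefore suffices to show that $\bm{M}^{\gamma,l}$ is uniformly spectrally bounded above and below, and that its diagonal entries are bounded above and below. Then $\xi^\top\widetilde{\bm{M}}^{\gamma,l}\xi$ and $\xi^\top\bm{D}\xi$ are each comparable to $|\xi|^2$, and we take $C_1,C_2$ as ratios of these bounds.

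The upper bounds are routine. Every entry of $\bm{M}^{\gamma,l}$ is bounded in $L^\infty$, since $c_ic_j/c_T\le c_\mathrm{max}$, the $\omega_i\le 1$, and the $\mathscr{D}_{ij}$ are fixed positive constants. Hence $\lambda_{\max}(\bm{M}^{\gamma,l})\le \|\bm{M}^{\gamma,l}\|_F$ is uniformly bounded. The diagonal lower bound is also immediate: $\bm{M}^{\gamma}_{ii}\ge \sum_{k\ne i} RTc_ic_k/(\mathscr{D}_{ik}c_T)\ge RT c_\mathrm{min}^2/(n c_\mathrm{max}\max\mathscr{D})>0$.

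The main obstacle is the uniform lower bound on $\lambda_{\min}(\bm{M}^{\gamma,l})$, which the paper asserts in \Cref{sec:OSM_equations} and which underlies \Cref{lemma:picard_wp}. We will prove it by decomposing $\xi=\alpha\mathbf{1}+\eta$ with $\eta\perp\mathbf{1}$. We use the weighted graph-Laplacian identity $\xi^\top\bm{M}\xi=\tfrac12\sum_{i\ne j}\frac{RTc_ic_j}{\mathscr{D}_{ij}c_T}(\xi_i-\xi_j)^2\ge \beta|\eta|^2$, where $\beta$ is a uniform multiple of the algebraic connectivity of the complete graph with weights bounded below. We also use $\gamma(\omega\cdot\xi)^2$ with $\omega\cdot\mathbf{1}=1$ and $|\omega|\le1$. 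Then $(\omega\cdot\xi)^2\ge \tfrac12\alpha^2-|\eta|^2$. Combining the two terms with a convex weighting $\theta\in(0,1)$ chosen small enough that $\beta(1-\theta)\ge\theta\gamma$ gives $\xi^\top\bm{M}^\gamma\xi\ge \theta\gamma\alpha^2/2+(\beta(1-\theta)-\theta\gamma)|\eta|^2\gtrsim |\xi|^2$. This is the step where care is needed to keep the constants independent of $x$. Once it is in place, the constants $C_1$ and $C_2$ follow, and the lemma is proved.
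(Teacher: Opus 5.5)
Your argument is correct and is essentially the paper's proof: you write $\widetilde{\bm{M}}^{\gamma,l}$ as a diagonal congruence of $\bm{M}^{\gamma,l}$, bound both $a^l$ and $a^l_\mathrm{diag}$ above and below by uniform multiples of $\int_\Omega \bar{J}^\top \bar{J}$, and take ratios of the eigenvalue bounds. The only difference is that you prove the uniform positive definiteness of $\bm{M}^{\gamma,l}$ yourself (graph-Laplacian bound on $\eta \perp \mathbf{1}$ plus the $\gamma(\omega\cdot\xi)^2$ term), where the paper cites Lemma 3.3 of \cite{aznaran2024finite} together with Ostrowski's theorem; one small fix is that the choice of $\theta$ must be strict, $\beta(1-\theta) > \theta\gamma$ (e.g.\ $\theta = \beta/(2(\beta+\gamma))$), since otherwise the coefficient of $|\eta|^2$ can vanish.
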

\begin{proof}
 Let $\bm{Z} = \mathrm{diag}((1/(M_1c_1), \dots, 1/(M_nc_n)))$, then 
 $\widetilde{\bm{M}}^\gamma = \bm{Z}\bm{M}^\gamma \bm{Z}^T$. Since $\bm{M}^{\gamma}$ 
 and $\bm{Z}$ are uniformly symmetric positive definite matrices \cite[Lemma 
 3.3]{aznaran2024finite}, it follows that $\widetilde{\bm{M}}^{\gamma,l}$ is also a 
 uniformly symmetric positive definite matrix \cite{ostrowski1959quantitative}. By 
 construction $\mathrm{diag}(\widetilde{\bm{M}}^{\gamma, l})$ is also uniformly 
 symmetric positive definite. So there exist global bounds on the eigenvalues of 
 $\widetilde{\bm{M}}^{\gamma, l}$ and $\mathrm{diag}(\widetilde{\bm{M}}^{\gamma, l})$. 
 Denote these global bounds as $0<\lambda_{\mathrm{min}}(\widetilde{\bm{M}}^{\gamma, 
 l})\leq\lambda_{\text{max}}(\widetilde{\bm{M}}^{\gamma, l})$ and 
 $0<\lambda_{\mathrm{min}}(\mathrm{diag}(\widetilde{\bm{M}}^{\gamma, 
 l}))\leq\lambda_{\text{max}}(\mathrm{diag}(\widetilde{\bm{M}}^{\gamma, l}))$ 
 respectively.

 First consider 
 \begin{subequations}
 \label{eq:spectral_equivalence_left}
     \begin{align}
     a^l(\bar{J}, \bar{J}) &= \sum_{i,j=1}^n \left ( 
     \widetilde{\bm{M}}^{\gamma,l}_{ij}J_j, J_i \right )_\Omega = \int_\Omega 
     \bar{J}^T \widetilde{\bm{M}}^{\gamma,l} \bar{J} \geq 
     \lambda_{\mathrm{min}}(\widetilde{\bm{M}}^{\gamma, l}) \int_\Omega \bar{J}^T 
     \bar{J} \\
     &\gtrsim \lambda_{\text{max}}(\mathrm{diag}(\widetilde{\bm{M}}^{\gamma, l})) 
     \int_\Omega \bar{J}^T \bar{J} \\
     &\geq \int_\Omega \bar{J}^T \mathrm{diag}(\widetilde{\bm{M}}^{\gamma, l})\bar{J} 
     = \sum_{i=1}^n \left (\widetilde{\bm{M}}_{ii}^{\gamma,l}J_{i} , J_{i} \right 
     )_\Omega = a^l_\mathrm{diag}(\bar{J}, \bar{J}).
     \end{align}
 \end{subequations}
 Similarly,
 \begin{subequations}
 \label{eq:spectral_equivalence_right}
     \begin{align}
     a^l(\bar{J}, \bar{J}) &= \int_\Omega \bar{J}^T \widetilde{\bm{M}}^{\gamma,l} 
     \bar{J} \leq \lambda_{\text{max}}(\widetilde{\bm{M}}^{\gamma, l}) \int_\Omega 
     \bar{J}^T \bar{J} \\
     &\lesssim \lambda_{\mathrm{min}}(\mathrm{diag}(\widetilde{\bm{M}}^{\gamma, l})) 
     \int_\Omega \bar{J}^T \bar{J} \\
     &\leq \int_\Omega \bar{J}^T \mathrm{diag}(\widetilde{\bm{M}}^{\gamma, l})\bar{J} 
     = a^l_\mathrm{diag}(\bar{J}, \bar{J}).
     \end{align}
 \end{subequations}
 Together \eqref{eq:spectral_equivalence_left} and 
 \eqref{eq:spectral_equivalence_right} give the desired spectral equivalence
\begin{equation}
 a^l_\mathrm{diag}(\bar{J}, \bar{J}) \lesssim a^l(\bar{J}, \bar{J}) \lesssim 
 a^l_\mathrm{diag}(\bar{J}, \bar{J}),
\end{equation}
for all $\bar{J}$, where the bounding constants are
\begin{equation}
\lambda_{\mathrm{min}}(\widetilde{\bm{M}}^{\gamma, 
l})/\lambda_{\text{max}}(\mathrm{diag}(\widetilde{\bm{M}}^{\gamma, l})) \quad 
\text{and} \quad
 \lambda_{\text{max}}(\widetilde{\bm{M}}^{\gamma, 
 l})/\lambda_{\mathrm{min}}(\mathrm{diag}(\widetilde{\bm{M}}^{\gamma, l})),
\end{equation}
respectively.
\end{proof}
Hence, in the discrete setting, if we solve the Picard linearization using GMRES 
\cite{saad1986gmres} preconditioned with $P$, we obtain $h$ and $k$ independent 
GMRES iteration counts. Note that the spectral equivalence is dependent on 
$\bar{c}^l$, and thus we do not in general expect robust GMRES iteration counts 
across fixed point iterations due to variations in $\bar{c}^l$.

\subsection{Manufactured solution test problem}
\label{sec:MMS_Picard}
We now demonstrate the \newline discretization-robustness of the block diagonal 
preconditioner $P$. We will do this for the manufactured solution test problem 
from \cite[Sec.~6.1]{van2022augmented}. This problem is defined on $\Omega = 
(0,1)^2$ with $n=4$ species. Let $k_1, k_2:\Omega \rightarrow \mathbb{R}$ be 
differentiable functions such that there exists $K_i>0$ such that $|k_i| < K_i$ 
for each $i$. Then letting
\begin{subequations}
\begin{align}
 &c_1 = k_1 + K_1, &c_2 &= -k_1 + K_1, \\
 &c_3 = k_2 + K_2, &c_4 &= -k_2 + K_2,
\end{align}
\end{subequations}
and 
$\mathscr{D}_{13}=\mathscr{D}_{31}=\mathscr{D}_{23}=\mathscr{D}_{32}=\mathscr{D}_{14}=\mathscr{D}_{41}=\mathscr{D}_{24}=\mathscr{D}_{42}$
gives the following exact solution for any $v_{\text{bulk}}$ and $M_i$:
\begin{subequations}
\begin{align} \label{eq:MMS}
 &v_{1} = - \frac{c_{T}}{\beta_{1}} \frac{\nabla c_{1}}{c_{1}} + \frac{ \rho 
 v_{\text{bulk}}}{c_{T}}, &v_{2} &= - \frac{c_{T}}{\beta_{1}} \frac{\nabla 
 c_{2}}{c_{2}} + \frac{ \rho v_{\text{bulk}}}{c_{T}}, \\
 &v_{3} = - \frac{c_{T}}{\beta_{2}} \frac{\nabla c_{3}}{c_{3}} + \frac{ \rho 
 v_{\text{bulk}}}{c_{T}}, &v_{4} &= - \frac{c_{T}}{\beta_{2}} \frac{\nabla 
 c_{4}}{c_{4}} + \frac{ \rho v_{\text{bulk}}}{c_{T}}, 
\end{align}
\end{subequations}
where
\begin{equation}
 \beta_{1} = 2K_1\left(\frac{1}{\mathscr{D}_{12}}+ 
 \frac{1}{\mathscr{D}_{14}}\right), \qquad \beta_{2} = 
 2K_2\left(\frac{1}{\mathscr{D}_{34}}+ \frac{1}{\mathscr{D}_{14}} \right).
\end{equation}
In this numerical example we choose
\begin{equation}
 k_1 = \frac{1}{2}\exp \left (8xy(1-x)(1-y) \right ), \qquad k_2 = 
 \frac{1}{2}\sin(\pi x) \sin ( \pi y),
\end{equation}
and $K_1 = K_2 = 1$. Furthermore
\begin{equation}
 \mathscr{D} = \begin{pmatrix}
      & 2 & 1 & 1 \\
     2 &  & 1 & 1 \\
     1 & 1 &  & 3 \\
     1 & 1 & 3 &  \\
 \end{pmatrix}, \qquad \rho v_{\text{bulk}} = \begin{pmatrix}
     0 \\ 1 
 \end{pmatrix},
\end{equation}
and $RT = p^{\text{ref}}=1$, $M_i =1$, and $\mu^{\text{ref}}_i = 0$ for each $i$. 
We enforce full Dirichlet boundary conditions on $\mu_i$ using the exact solution, 
and the reaction rates are $r_i := \nabla \cdot (c_i v_i)$ for each $i$.
This problem can 
straightforwardly be extended to $\Omega = (0,1)^3$ by choosing
\begin{equation}
 k_1 = \frac{1}{2}\exp \left (8xyz(1-x)(1-y)(1-z) \right ), \qquad k_2 = 
 \frac{1}{2}\sin(\pi x) \sin ( \pi y)\sin ( \pi z),
\end{equation}
and $\rho v_{\text{bulk}} = (0, 1, 0)^\top.$

The Picard iteration is the iterative process of solving \eqref{eq:picard} and 
computing $\bar{c}^{l+1}$ using $\bar{\mu}^{l+1}$. We deem the Picard iteration to 
have converged when the norm of
$(\bar{J}^{l+1}-\bar{J}^{l}, \bar{\mu}^{l+1} - \bar{\mu}^{l})$
in $[L^2(\Omega)]^{[(d+1) \times n]}$ is less than 
$10^{-9}$.
We start the Picard iteration with a thermodynamically consistent 
initial guess for the concentrations $c_i^0$ such that it satisfies the volumetric 
equation of state \eqref{eq:state}.
In general, when the discrete concentrations are computed by 
\cref{eq:ideal_constitutive_law}, they will only satisfy \eqref{eq:state}
up to a small discretization error.
Therefore, at each step of the Picard iteration, we normalize the discrete
concentrations such that \eqref{eq:state} is satisfied to machine precision.

Each Picard linearization is solved using GMRES preconditioned with $P$, with 
direct solvers for the inner subproblems on the block diagonal using MUMPS 
\cite{amestoy2001fully}. We choose 
dimensionally consistent $\kappa_i^l := \alpha M_i^2 
L_\text{ref}^2\max_{j}|\langle\widetilde{\bm{M}}_{ij}^{\gamma, l}\rangle_\Omega|$ 
where $\alpha > 0$ is a dimensionless constant,
$L_\text{ref}$ is the characteristic length 
scale of $\Omega$ ($L_\text{ref}=1$ in this example), 
and $\langle\widetilde{\bm{M}}_{ij}^{\gamma, l}\rangle_\Omega$ is the average of 
$\widetilde{\bm{M}}_{ij}^{\gamma,l}$ in $\Omega$.
Heuristically this corresponds to 
$\kappa_i^l$ being proportional to the largest of the 
$\widetilde{\bm{M}}_{ij}^{\gamma,l}$ for $j \in \{ 
1, \dots, n\}$, but since $\kappa_i^l$ is a scalar we average over $\Omega$.
We now 
choose large $\alpha = 10$ in order for the approximate Schur complement to be 
close to the actual Schur complement. The coarse meshes are $2\times 2$ and 
$2\times 2\times 2$ meshes for $\Omega =(0,1)^2$ and $\Omega =(0,1)^3$ 
respectively, and denote by $m$ the number of uniform refinements of these coarse 
meshes.

All numerical experiments in this work employ Firedrake 
\cite{ham2023c, rathgeber2016firedrake} and solver routines from PETSc 
\cite{balay2019petsc, dalcin2011parallel, kirby2018,mitchell2016} on a node with 64 
CPU cores and 2 TB of RAM\footnote{The code is available on 
\href{https://github.com/KarsKnook/OSMPreconditioners}{https://github.com/KarsKnook/OSMPreconditioners} and software versions are archived on Zenodo \cite{zenodo/Zenodo-20260415.0}.}.
\Cref{tab:MMS_its_picard} confirms that the preconditioner $P$ exhibits robustness 
with respect to $h$ and $k$ in two and three dimensions.

\begin{table}[]
 \centering
 \scriptsize{
     \begin{tabular}{r|*{5}{c}}
     \toprule
     & \multicolumn{5}{c}{$d=2$} 
     \\
     $m\setminus k$ 
     & 1 & 2 & 3 & 4 & 5
     \\
     \midrule
     1 & 10.67 (15) & 10.77 (13) & 11.38 (13) & 11.85 (13) & 11.85 (13)  \\
     \rowcolor{gray!25}
     2 & 11.86 (14) & 11.85 (13) & 11.92 (13) & 12.00 (13) & 12.00 (13) \\
     3 & 12.08 (13) & 12.00 (13) & 12.08 (13) & 12.23 (13) & 12.08 (13) \\
     \rowcolor{gray!25}
     4 & 12.08 (13) & 12.23 (13) & 12.31 (13) & 12.15 (13) & 12.08 (13) \\
     5 & 12.23 (13) & 12.15 (13) & 12.08 (13) &  12.08 (13) & 12.08 (13) \\
     \midrule
     & \multicolumn{5}{c}{$d=3$} 
     \\
     $m\setminus k$ 
     & 1 & 2 & 3 & 4 & 5
     \\
     \midrule
     1 & 10.58 (12) & 10.92 (12) & 11.36 (11) & 11.36 (11) & 11.45 (11) \\
     \rowcolor{gray!25}
     2 & 11.42 (12) & 11.45 (11) & 11.64 (11) & 11.55 (11) & 11.55 (11) \\
     3 & 11.58 (12) & 11.55 (11) & 11.64 (11) & 11.73 (11) & 11.64 (11) \\
     \rowcolor{gray!25}
     4 & 11.64 (11) & 11.73 (11) & 11.73 (11) & 11.73 (11) & \\
     \bottomrule
     \end{tabular}
 }
 \caption{Average number of GMRES iterations for the manufactured solution test 
 problem solved using Picard iteration with preconditioner $P$ with ideal inner solvers. The number of 
 Picard iterations is in brackets.}
 \label{tab:MMS_its_picard}
\end{table}

\section{Newton linearization for isobaric, isothermal, ideal gaseous mixtures}
\label{sec:newton}
The Picard iteration converges slowly or not at all for some problems. We 
therefore prefer Newton's method because of its quadratic rate of convergence and 
greater robustness \cite{baierreinio2024highorderfiniteelementmethods}. We derive 
a finite element discretization of \eqref{eq:augmented_OSM_}--\eqref{eq:bcs} 
similarly to the finite element discretization of the Picard linearization: for 
given $v_{\text{bulk}}$, $\gamma>0$, and $\bar{r}$, find $J_i \in \{\sigma \in 
\text{RT}_k : \sigma \cdot \hat{n}|_{\Gamma_{N_i}} = \hat{g}_i \}$ and $\mu_i \in 
\text{DG}_{k-1}$ such that
\begin{subequations} \label{eq:OSM_discretization}
 \begin{align}
  \sum_{i,j=1}^n \Big( \widetilde{\bm{M}}_{ij}^{\gamma}J_{j}, \tau_{i} 
  \Big)_\Omega - 
  b(\bar{\tau}, \bar{\mu}) - \sum_{i=1}^n \left ( 
  \frac{\gamma}{\rho}v_{\text{bulk}}, \tau_{i}\right )_\Omega &= -\sum_{i=1}^n 
  \frac{1}{M_i} (f_{i}, \tau_{i}\cdot \hat{n})_{\Gamma_{D_i}}, \\
 b(\bar{J}, \bar{w}) &= -\sum_{i=1}^n (r_i, w_{i})_\Omega,
 \end{align}
\end{subequations}
for $\tau_i \in \{\sigma \in \text{RT}_k : \sigma \cdot \hat{n}|_{\Gamma_{N_i}} = 
0 \}$ and $w_i \in \text{DG}_{k-1}$ $\forall i \in \{1, \dots, n \}$.
Here $\hat{g}_i$ is a discrete interpolant of the boundary data $g_i$, such that
$\sigma \cdot \hat{n}|_{\Gamma_{N_i}} = \hat{g}_i$ can be satisfied exactly
for some $\sigma \in \text{RT}_k$.

The linear operator in the Newton linearization,
\begin{align}
\label{eq:jacobian}
\bm{J} = \begin{pmatrix}
 A & B^T + E + F \\
 B & \\
\end{pmatrix},
\end{align}
shares some of its structure with the Picard linearization.
$A$ is in essence the same as $A^l$ except it uses the value for $\bar{c}$ at the 
current iterate of Newton's method. The main differences with the Picard 
linearization are the $E$ and $F$ operators corresponding to $\sum_{i,j=1}^n 
\Big(\widetilde{\bm{M}}_{ij}^{\gamma}J_{j}, \tau_{i}\Big)_\Omega$ and 
$\sum_{i=1}^n \left ( 
\frac{\gamma}{\rho}v_{\text{bulk}}, \tau_{i}\right )_\Omega$ terms differentiated 
with respect to $\bar{\mu}$. Due to the presence of the $E$ and $F$ operators the 
Schur complement acquires an additional contribution when employing the AL 
technique as in \eqref{eq:augmented_lagrangian}. We will neglect this contribution 
in the Schur complement approximation for the Newton linearization. Thus we employ 
$P$ for the Newton linearization also, evaluated at the current Newton iterate.

To make the AL preconditioner fully scalable, the diagonal blocks in $P$ should 
not all be inverted exactly. The $\tilde{S}_i$ blocks can be inverted exactly 
because these are $\text{DG}$ mass matrices which are trivial to invert as they 
have no coupling between cells. For large $\kappa_i^l$ the Schur complement 
approximation gets better, but $A^l_i + \kappa^l_i B_i^TM^{-1}_{p,i}B_i$ becomes 
nearly singular, as $\kappa^l_i B_i^TM^{-1}_{p,i}B_i$ is a singular symmetric 
positive semi-definite operator for each $i$ \cite{lee2007, schoberl1999multigrid}. 
Since $A^l_i + \kappa^l_i B_i^TM^{-1}_{p,i}B_i$ corresponds to an $H(\text{div}; \Omega)$ 
Riesz map with a heterogeneous coefficient in front of the mass term, we can solve 
these problems robustly with respect to $h$ and $k$ using geometric multigrid 
(GMG) with vertex or edge star patch smoothers \cite{arnold2000, 
schoberl1999multigrid}. Vertex/edge star patch smoothers are ASM where the patches 
only include the degrees of freedom on the interior of all topological entities 
incidental to the vertex/edge, as depicted in \Cref{fig:star} for an
$\text{RT}_{1}$ vertex star patch. Whilst edge star patches suffice in 3D, we in 
practice use vertex star patches in the numerical simulations as this better 
demonstrates the robustness of the AL preconditioning strategy. The main benefit 
of splitting across fields and chemical species in $P$ is that the patch problems 
are the smallest they can be while retaining robustness. Small patch problems 
have lower assembly and solve costs, which are especially beneficial for high $k$. 

To further improve the robustness of the augmented Lagrangian preconditioner, we 
employ $P$ as a block diagonal smoother inside a monolithic GMG preconditioner. 
Whereas block diagonal preconditioners naturally employ solvers like multigrid for 
single blocks, monolithic multigrid methods retain the coupling between different 
fields across levels, and generally require careful design of relaxation methods 
to treat the coupled problem on each level; in this context $P$ deals with this 
very effectively. Another benefit of this approach is that for additional 
variables beyond $\bar{J}$ and $\bar{\mu}$ (i.e.~variables specific to different 
applications), their corresponding diagonal blocks of the Jacobian can be smoothed 
in a scalable and discretization-robust manner while still being coupled in a 
monolithic GMG cycle. This avoids having to approximate nested Schur complements. 
A complete solver diagram is shown in \Cref{fig:mono}.

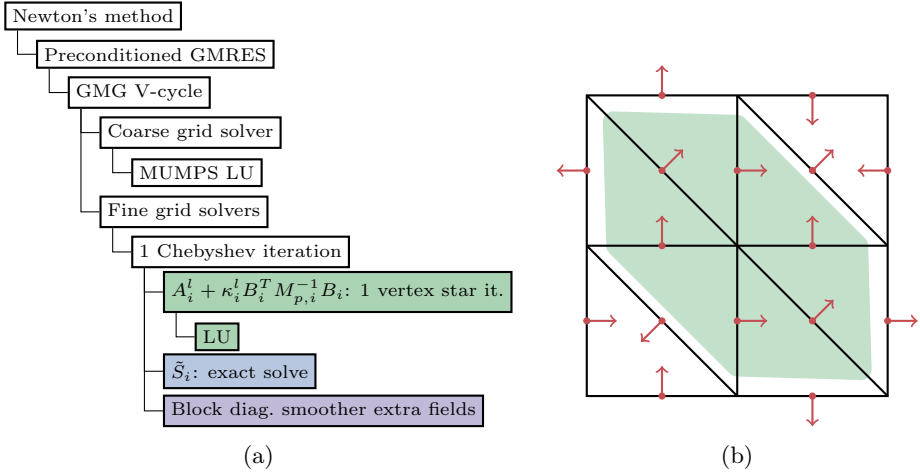
\begin{figure}
  \centering
  \begin{subfigure}[b]{0.56\textwidth}
      \centering
          \scriptsize{
         \begin{tikzpicture}[%
           every node/.style={draw=black, thick, anchor=west},
           bluebox/.style={fill=seabornblue!40},
           greenbox/.style={fill=seaborngreen!50},
           violetbox/.style={fill=seabornpurple!50},
           grow via three points={one child at ([xshift=12pt, 
           yshift=-9pt]\tikzparentnode.south west) and
           two children at ([xshift=12pt, yshift=-9pt]\tikzparentnode.south west) 
           and ([xshift=12pt, yshift=-24pt]\tikzparentnode.south west)},
           edge from parent path={([xshift=5pt]\tikzparentnode.south west) |- 
           (\tikzchildnode.west)}]
         \node {Newton's method}
         child { node {Preconditioned GMRES}
           child { node {GMG V-cycle}
              child{node{Coarse grid solver}
                 child{node{MUMPS LU}}
                 }
              child [missing]
              child{node{Fine grid solvers}
                 child{node{1 Chebyshev iteration}
                     child{node[greenbox]{$A^l_i + \kappa^l_i 
                     B_i^TM^{-1}_{p,i}B_i$: 1 vertex star it.}
                         child{node[greenbox]{LU}}
                         }
                     child [missing]
                     child{ node[bluebox] {$\tilde{S}_i$: exact solve}}
                     child{ node[violetbox] {Block diag.~smoother extra fields}
                         }
                     }
                 }
              }
           };
         \end{tikzpicture}}
         \caption{}
         \label{fig:mono}
  \end{subfigure}
 \begin{subfigure}[b]{0.4\textwidth}
   \centering
       \begin{tikzpicture}[scale=4]
         \foreach \i/\k in {0/0, 0.5/1, 1/2} {
           \foreach \j/\l in {0/0, 0.5/1, 1/2} {
             \coordinate (v\k\l) at (\i, \j);
           }
         }
         \coordinate (o20) at ($(v20) + (135:0.125)$);
         \coordinate (o21) at ($(v21) + (180:0.1)$);
         \coordinate (o12) at ($(v12) + (270:0.1)$);
         \coordinate (o02) at ($(v02) + (315:0.125)$);
         \coordinate (o01) at ($(v01) + (0:0.1)$);
         \coordinate (o10) at ($(v10) + (90:0.1)$);
     
         \path[fill=seaborngreen, opacity=0.35]
         \convexpath{o10,o01,o02,o12,o21,o20}{1pt};
     
         \draw[thick, black, line join=miter] (v00) -- (v10) -- (v20) -- (v21) -- 
         (v22) -- (v12) -- (v02) -- (v01) -- cycle;
         \draw[thick, black, line join=miter] (v01) -- (v11) -- (v21);
         \draw[thick, black, line join=miter] (v10) -- (v11) -- (v12);
         \draw[thick, black, line join=miter] (v01) -- (v10);
         \draw[thick, black, line join=miter] (v02) -- (v11) -- (v20);
         \draw[thick, black, line join=miter] (v12) -- (v21);
     
         \foreach \i in {0.25, 0.75} {
           \foreach \j in {0, 0.5, 1} {
             \draw[fill, seabornred] (\i, \j) circle (0.01);
             \draw[fill, seabornred] (\j, \i) circle (0.01);
           }
         }
         \draw[fill, seabornred] (0.25, 0.25) circle (0.01);
         \draw[fill, seabornred] (0.25, 0.75) circle (0.01);
         \draw[fill, seabornred] (0.75, 0.25) circle (0.01);
         \draw[fill, seabornred] (0.75, 0.75) circle (0.01);
     
         \draw[thick, seabornred, line join=miter, ->] (0,0.25) -- (0.1, 0.25);
         \draw[thick, seabornred, line join=miter, ->] (0,0.75) -- (-0.1, 0.75);
         \draw[thick, seabornred, line join=miter, ->] (0.5,0.25) -- (0.6, 0.25);
         \draw[thick, seabornred, line join=miter, ->] (0.5, 0.75) -- (0.6, 0.75);
         \draw[thick, seabornred, line join=miter, ->] (1,0.25) -- (1.1, 0.25);
         \draw[thick, seabornred, line join=miter, ->] (1,0.75) -- (0.9, 0.75);
         \draw[thick, seabornred, line join=miter, ->] (0.25, 0) -- (0.25, 0.1);
         \draw[thick, seabornred, line join=miter, ->] (0.75, 0) -- (0.75, -0.1);
         \draw[thick, seabornred, line join=miter, ->] (0.25, 0.5) -- (0.25, 0.6);
         \draw[thick, seabornred, line join=miter, ->] (0.75, 0.5) -- (0.75, 0.6);
         \draw[thick, seabornred, line join=miter, ->] (0.25, 1) -- (0.25, 1.1);
         \draw[thick, seabornred, line join=miter, ->] (0.75, 1) -- (0.75, 0.9);
     
         \draw[thick, seabornred, line join=miter, ->] (0.25, 0.25) -- (0.18, 
         0.18);
         \draw[thick, seabornred, line join=miter, ->] (0.25, 0.75) -- (0.32, 
         0.82);
         \draw[thick, seabornred, line join=miter, ->] (0.75, 0.25) -- (0.82, 
         0.32);
         \draw[thick, seabornred, line join=miter, ->] (0.75, 0.75) -- (0.82, 
         0.82);
       
       \end{tikzpicture}
       \caption{}
       \label{fig:star}
     \end{subfigure}
     \caption{Solver diagram for the monolithic GMG preconditioner with AL 
     smoother, and a vertex star patch for the $\text{RT}_1$ discretization. The 
     red arrows correspond to the $\text{RT}_1$ degrees of freedom.}
     \label{fig:monolithic_preconditioner}
\end{figure}

\subsection{Manufactured solution test problem}
We compare three different preconditioners for the manufactured solution test 
problem as in \Cref{sec:MMS_Picard}. The first preconditioner is the AL 
preconditioner $P$ in which the $H(\text{div}; \Omega)$ Riesz maps are solved using GMG 
with vertex star smoothers. The second is the monolithic GMG preconditioner we 
propose with AL smoother, as shown in \Cref{fig:mono}. The third is a fully 
monolithic GMG preconditioner with a vertex Vanka smoother for all variables 
(i.e.~the relaxation iterates over vertices and solves for all degrees of freedom 
in the closure of the star of the vertex). When employing the monolithic GMG 
preconditioner with vertex Vanka smoother no modification using the AL technique 
is necessary. We choose $\alpha =0.1$ as this balances the correctness of the 
Schur complement approximation without making the $H(\text{div}; \Omega)$ Riesz maps too 
ill-conditioned. From now on, we employ the Eisenstat--Walker method 
\cite{eisenstat1996choosing} to adaptively set the tolerance for the linear 
solves, and deem Newton's method converged when the ratio of the Euclidean norm of the nonlinear residual to its initial value is less than $10^{-8}$.

The average GMRES iteration counts for the three preconditioners are presented in 
\Cref{tab:MMS_its}. The monolithic GMG preconditioners both exhibit robust 
iteration counts with respect to $h$ and $k$. We do not observe robustness with 
respect to $h$ for the AL preconditioner alone. We further note that the 
monolithic GMG preconditioner with AL smoother requires at most twice as many 
iterations as the monolithic GMG preconditioner with vertex Vanka smoother, but 
its patches are substantially smaller, and run times much lower, as shown in 
\Cref{tab:patch_sizes}. 

In this numerical experiment the vertex Vanka patches contain approximately four 
to nine times more degrees of freedom ($n$ is the lower limit) than the vertex 
star patches. There are $n$ times as many vertex star patches than vertex Vanka 
patches. Since the patch problems are solved with dense linear algebra, cubic LU
factorization costs are incurred, leading to significantly higher run times.
These are nearly up to two orders of magnitude in some cases, and sometimes the 
comparison cannot be made as the monolithic GMG with vertex Vanka smoother 
required more RAM than what was available. Assembly costs are also higher, with 
more than six times the number of nonzeros. The monolithic multigrid 
preconditioner with AL smoother offers substantially better robustness and 
efficiency than the two alternatives considered here. 

\begin{table}[]
 \centering
 \scriptsize{
     \begin{tabular}{r|*{5}{c}}
     \toprule
     & \multicolumn{5}{c}{$d=2$} 
     \\
     $m \backslash k$
     & 1 & 2 & 3 & 4 & 5
     \\
     \midrule
     1 & 18.6/9.40/4.2 (5) & 18.2/7.4/4.2 (5) & 18.8/6.8/3.4 (5) & 19.6/5.6/3.4 
     (5) & 19.6/5.6/3.4 (5) \\
     \rowcolor{gray!25}
     2 & 25.6/10.0/5.2 (5) & 23.4/7.2/4.0 (5) & 24.4/6.4/3.4 (5) & 25.8/6.0/3.4 
     (5) & 25.8/6.0/3.4 (5) \\
     3 & 34.0/10.2/5.4 (5) & 29.4/6.0/3.4 (5) & 29.2/6.0/3.4 (5) & 29.8/5.8/2.8 
     (5) & 30.2/5.8/3.2 (5) \\
     \rowcolor{gray!25}
     4 & 39.4/10.2/5.6 (5) & 33.0/6.4/3.4 (5) & 34.2/5.6/2.8 (5) & 34.2/4.8/2.6 
     (5) & 34.6/4.8/2.8 (5) \\
     5 & 44.8/10.8/6.2 (5) & 35.6/6.8/3.4 (5) & 36.4/5.4/2.8 (5) & 38.4/5.0/2.6 
     (5) & 38.4/6.2/2.6 (6) \\
     \midrule
     & \multicolumn{5}{c}{$d=3$} 
     \\
     $m\backslash k$ 
     & 1 & 2 & 3 & 4 & 5
     \\
     \midrule
     1 & 19.8/9.30/4.8 (4) &  18.3/6.8/4.3 (4) &  19.5/6.0/3.5 (4) & 20.0/5.5/3.5 
     (4) & 20.3/5.5/3.0 (4) \\
     \rowcolor{gray!25}
     2 & 28.0/10.8/5.8 (4) & 26.3/7.3/4.5 (4) & 26.3/6.0/3.8 (4) & 27.0/5.5/3.5 
     (4) & 27.5/5.3/--- (4) \\
     3 & 38.3/10.3/7.0 (4) & 34.0/7.0/4.5 (4) & 35.8/6.3/--- (4) & 36.3/5.0/--- 
     (4) & 36.8/4.8/--- (4) \\
     \rowcolor{gray!25}
     4 & 45.8/10.5/7.5 (4) & 40.5/6.5/4.3 (4) & 41.0/6.0/--- (4) & ---/4.5/--- (4) 
     & \\
     \bottomrule
     \end{tabular}
 }
 \caption{Average number of GMRES iterations for the manufactured solution test 
 problem solved with AL/monolithic GMG with AL smoother/monolithic GMG with vertex 
 Vanka smoother preconditioners. The number of Newton iterations is given in 
 brackets.}
 \label{tab:MMS_its}
\end{table}

\begin{table}[]
 \centering
 \scriptsize{
     \begin{tabular}{r|*{5}{c}}
     \toprule
     $k$ & 1 & 2 & 3 & 4 & 5 \\
     \midrule
     & \multicolumn{5}{c}{Average patch size} \\
     \midrule
     Vertex star & 33 & 161 & 446 & 953 & 593 \\
     \rowcolor{gray!25}
     Vertex Vanka & 300 & 1240 & 2095 & 4229 & 3703 \\
     Ratio & 9.1 & 7.7 & 4.7 & 4.5 & 6.2 \\
     \midrule
     & \multicolumn{5}{c}{Assembled nonzeros} \\
     \midrule
     Vertex star & $1.10\times10^7$ & $1.63\times10^8$ & $1.51 \times 10^7$ & $5.79 \times 10^7$ & $2.15 \times 10^7$ \\
     \rowcolor{gray!25}
     Vertex Vanka & $7.00 \times 10^7$ & $1.03 \times 10^9$ & $9.59 \times 10^7$ & $3.70 \times 10^8$ & $1.38 \times 10^8$ \\
     Ratio & 6.4 & 6.3 & 6.4 & 6.4 & 6.4 \\
     \midrule
     & \multicolumn{5}{c}{Run time (s)} \\
     \midrule
     Vertex star & $5.05 \times 10^1$ & $3.69 \times 10^2$ & $7.17 \times 10^1$ & $3.10 \times 10^2$ & $1.24 \times 10^3$ \\
     \rowcolor{gray!25}
     Vertex Vanka & $1.67 \times 10^2$ & $6.24 \times 10^3$ & $2.48 \times 10^3$ & $2.42 \times 10^4$ & $1.95 \times 10^4$ \\
     Ratio & 3.3 & 17 & 35 & 78 & 16 \\
     \bottomrule
     \end{tabular}
 }
 \caption{Average patch size and assembled nonzeros on the finest level of GMG, 
 and total run time of the solver for the manufactured solution test problem in 
 three dimensions for each $k$. The comparisons are made for the highest available 
 $m$ for the vertex Vanka smoother.
 }
 \label{tab:patch_sizes}
\end{table}

\subsection{Diffusion in human airways}
\label{sec:lungs}
A physically relevant test problem is the diffusion of nitrogen, oxygen, 
carbon 
dioxide, and water vapour (labeled species 1, 2, 3, and 4 respectively) in the 
human airways \cite[Sec.~6.2]{van2022augmented}. A realistic time-dependent 
lung model must consider convective forces and pressure-driven elastic expansion, 
but in this test problem we only consider the time-averaged diffusion dynamics.

We prescribe the following boundary conditions for the mole fractions at the 
entrance of the trachea and at the end of the bronchi \cite{guyton2006text}:
\begin{align}
\begin{cases}
 \left . \chi_{1} \right |_\text{trac} = 0.7409, & \left . \chi_{1} \right 
 |_\text{bron} = 0.7490, \\
 \left . \chi_{2} \right |_\text{trac} = 0.1967, & \left . \chi_{2} \right 
 |_\text{bron} = 0.1360, \\
 \left . \chi_{3} \right |_\text{trac} = 0.0004, & \left . \chi_{3} \right 
 |_\text{bron} = 0.0530, \\
 \left . \chi_{4} \right |_\text{trac} = 0.0620, & \left . \chi_{4} \right 
 |_\text{bron} = 0.0620,
\end{cases}
\end{align}
and no-flux boundary conditions everywhere else, which correspond to gas 
impermeable walls.
We indirectly implement mole fraction boundary conditions by appropriately
choosing the Dirichlet data $f_i$ in \cref{eq:bc_mu}.
In particular, we first fix an ambient pressure of
$p=101325$ J m\textsuperscript{$-3$} in \cref{eq:state};
the value of $f_i$ can then be calculated unambiguously using
\cref{eq:state} and \cref{eq:ideal_constitutive_law}.

We set $v_{\text{bulk}} = 0$ and $r_i = 0$ for each $i$.
Other problem parameters include
$T=298$ K and Stefan--Maxwell diffusivities
\begin{align}
\mathscr{D} = 
 \begin{pmatrix}
& 21.87 & 16.63 & 23.15 \\
21.87 & & 16.40 & 22.85 \\ 
16.63 & 16.40 & & 16.02 \\ 
23.15 & 22.85 & 16.02 &  
 \end{pmatrix} \times 10^{-6} \text{ m}^2 \text{ s}^{-1},
\end{align}
and $M_1 = 0.028$ kg mol\textsuperscript{$-1$}, $M_2 = 0.032$ kg 
mol\textsuperscript{$-1$}, $M_3 =0.044$ kg mol\textsuperscript{$-1$}, $M_4 = 
0.018$ kg mol\textsuperscript{$-1$} respectively.

The original mesh used in \cite{van2022augmented} is shown in 
\Cref{fig:lungs_original}. It was made with Gmsh \cite{geuzaine2009} and has 
404174 cells, which is too fine to be the coarsest mesh in a geometric multigrid 
cycle if we want to consider several refinements and polynomial degrees. Hence, we 
use a simpler geometry with only 2407 cells consisting of 3 cylinders attached at 
120 degree angles in a two-dimensional plane using ngsPETSc 
\cite{betteridge2024ngspetsc, schoberl1997netgen}, as shown in 
\Cref{fig:lungs_simplified}. The cylinders crudely represent the trachea and the 
two primary bronchi.

The average linear and nonlinear iteration counts for the monolithic GMG 
preconditioner with AL smoother are displayed in \Cref{tab:lungs_its}. The 
preconditioner yields discretization-robust GMRES iteration counts on the 
simplified lung geometry, at least as far as can be solved on the computational
resources available.

We then solved the problem on the original mesh, refined twice uniformly with 
$k=1$ (i.e.~$\text{RT}_1-\text{DG}_0$ finite elements). In \Cref{fig:oxygen} the 
mole fraction of oxygen is displayed, and in \Cref{fig:lungs_zoom} the mole 
fraction and mass flux streamlines of carbon dioxide in one of the bronchioles 
are shown. This problem has approximately 316 million degrees of freedom and was 
solved in 6 Newton steps each requiring an average of 8.83 preconditioned GMRES 
iterates. The $L^2(\Omega)$-error of the mass-average constraint \eqref{eq:vbulk} and the 
volumetric equation of state \eqref{eq:state} are $6.3 \times 10^{-6}$ and $8.0 
\times 10^{-5}$ respectively. Without the preconditioner this high resolution 
simulation would not have been possible. The ratio of the largest cell diameter to 
the smallest cell diameter is 51, demonstrating that the preconditioner remains 
effective on mildly anisotropic meshes.

\begin{figure}
 \centering
 \begin{subfigure}{0.3\textwidth}
 \includegraphics[width=\linewidth]{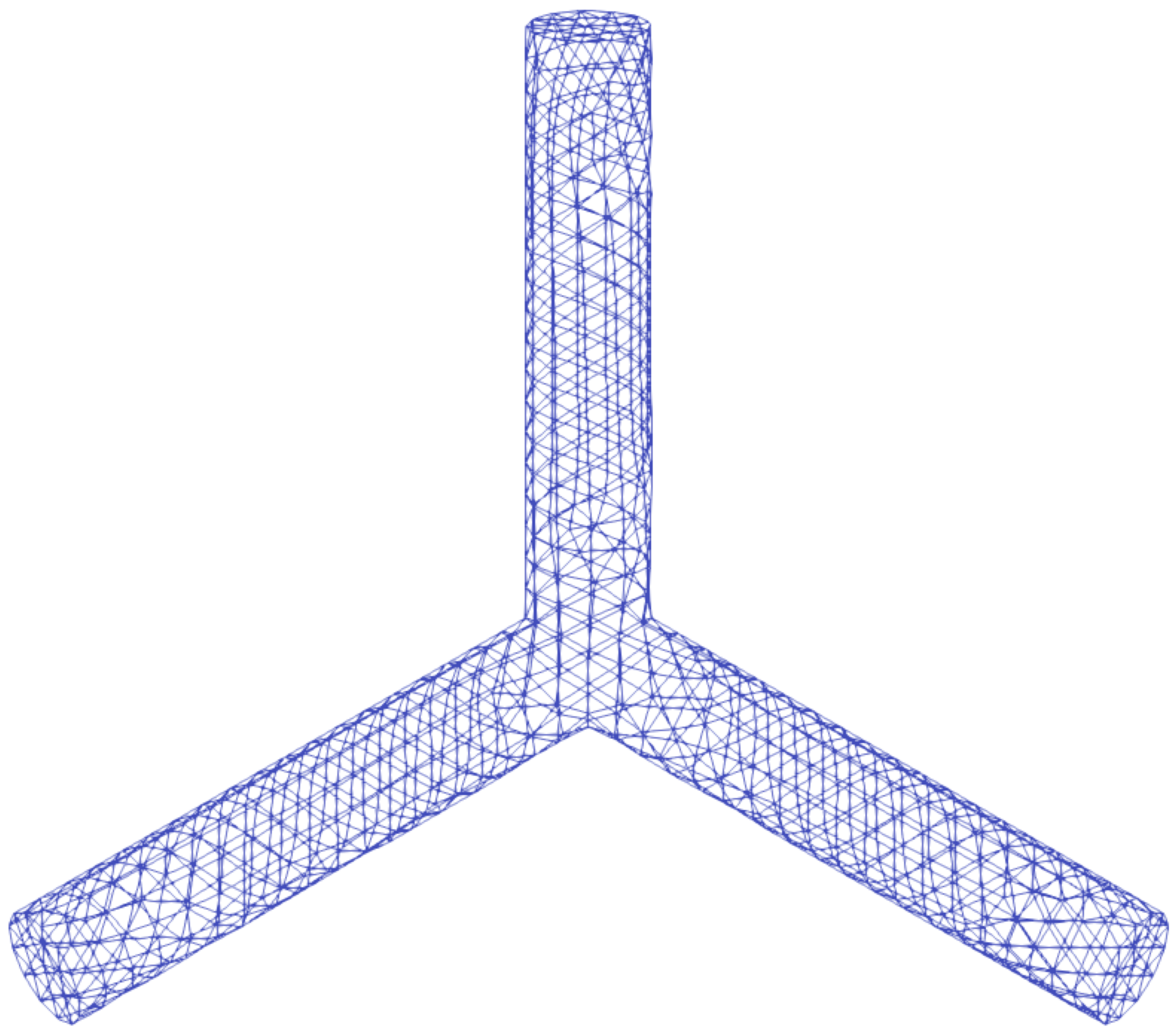}
 \caption{}
 \label{fig:lungs_simplified}
 \end{subfigure}
 \begin{subfigure}{0.25\textwidth}
 \includegraphics[width=\linewidth]{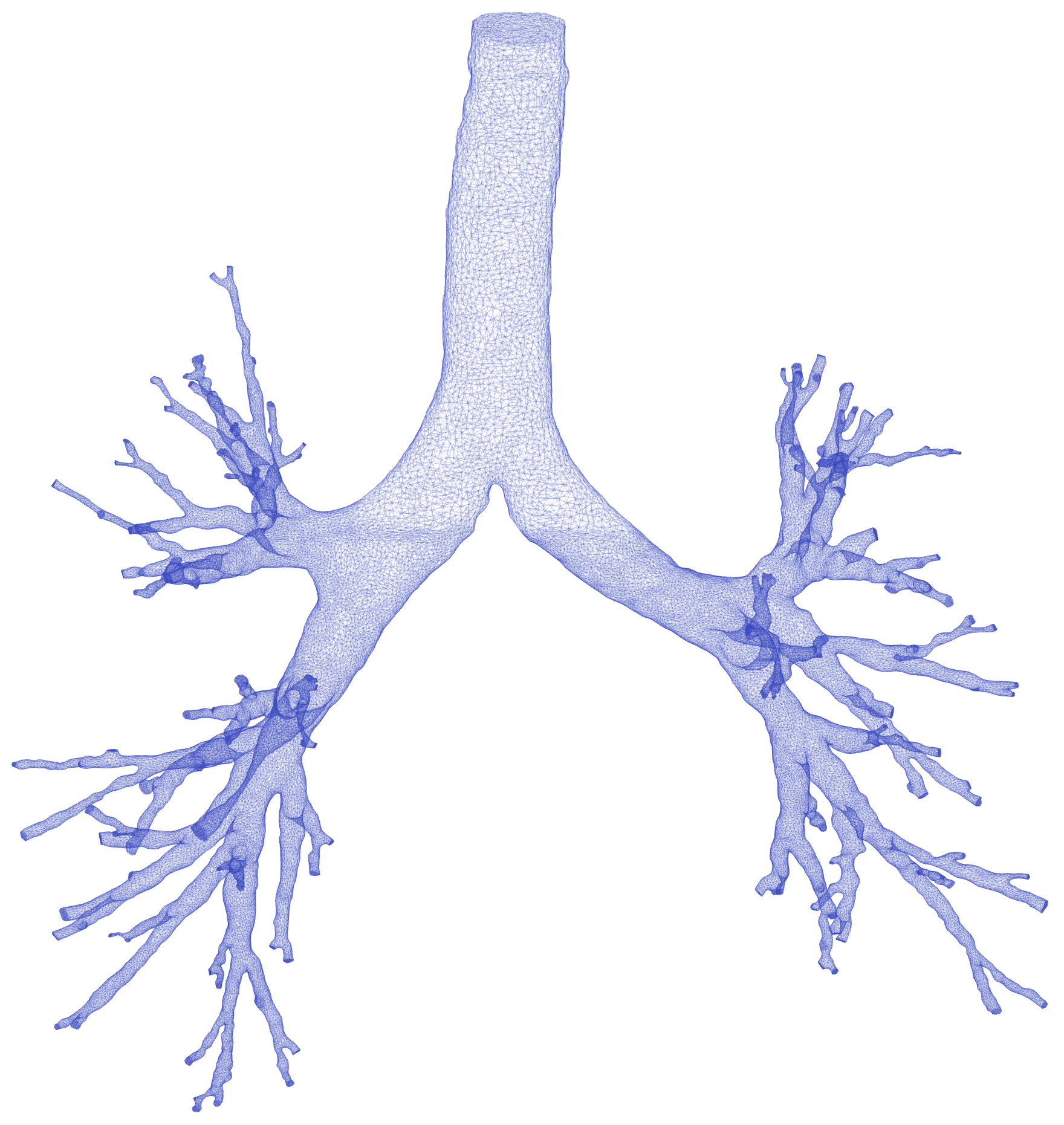}
 \caption{}
 \label{fig:lungs_original}
 \end{subfigure}
 \begin{subfigure}{0.28\textwidth}
 \includegraphics[width=\linewidth]{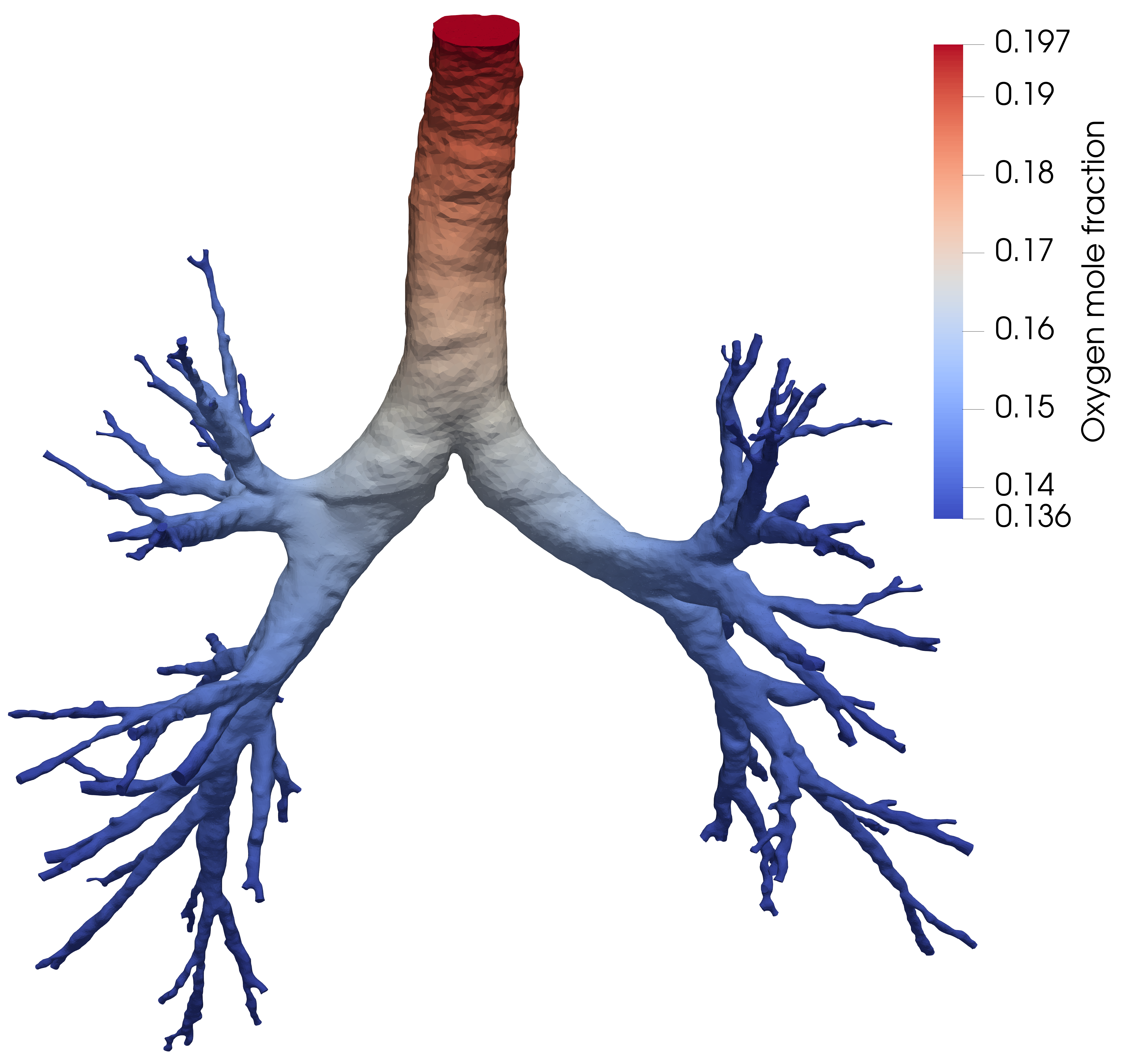}
 \caption{}
 \label{fig:oxygen}
 \end{subfigure}
 \caption{Simplified mesh, original mesh, and mole fraction of oxygen in the human 
 airways.}
 \label{fig:lungs}
\end{figure}

\begin{figure}
 \centering
 \includegraphics[width=0.5\linewidth]{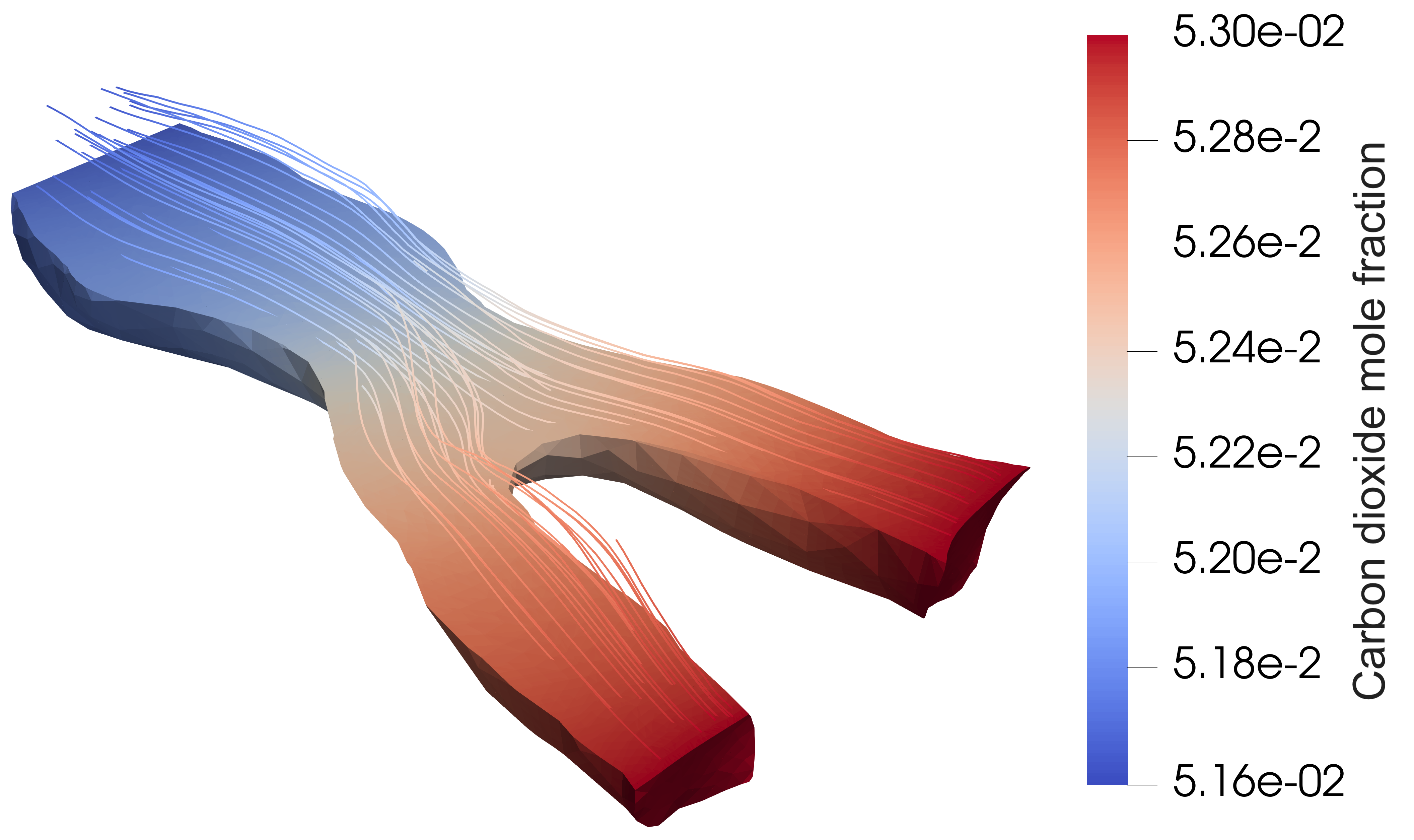}
 \caption{Carbon dioxide mole fraction on a cross section of a bronchiole and 
 streamlines of the carbon dioxide mass flux.
}
 \label{fig:lungs_zoom}
\end{figure}

\begin{table}[]
 \centering
 \scriptsize{
     \begin{tabular}{r|*{4}{c}}
     \toprule
     $m\setminus k$ 
     & 1 & 2 & 3 & 4
     \\
     \midrule
     1 & 6.25 (4) & 4.67 (3) & 4 (3) & 4 (3)  \\
     \rowcolor{gray!25}
     2 & 8.25 (4) & 4.67 (3) & 5 (4) & \\
     3 & 11.5 (4) & 6 (4) & & \\
     \rowcolor{gray!25}
     4 & 10.25 (4) & & & \\
     \bottomrule
     \end{tabular}
 }
 \caption{Average number of GMRES iterations for the diffusion in the human 
 airways problem of \Cref{sec:lungs} solved with the monolithic GMG preconditioner 
 with AL smoother. The number of Newton iterations is given in brackets.}
 \label{tab:lungs_its}
\end{table}

\section{Anisothermal mixtures}
\label{sec:anisothermal}
Thermal effects can incorporated in the OSM equations. Here, we consider a 
formulation of the anisothermal OSM problem that is structurally similar to the 
isothermal problem, because it incorporates 
the thermal energy as a pseudo-species 
\cite{van2022consolidated}. The analogues of velocity and chemical potential for 
thermal energy are the thermal velocity $v_0: \Omega \rightarrow \mathbb{R}^d$ in 
m s\textsuperscript{$-1$} and absolute temperature $T: \Omega \rightarrow 
\mathbb{R}$ in K respectively. In this setting the 0 index refers to thermal 
energy as a 
pseudo-species and the chemical species are indexed with $i>0$.

The diffusion driving forces of the chemical species are the same as in the 
isothermal setting, provided that one expands the species chemical potential 
gradients in terms of mole fraction gradients \cite{van2022consolidated}. 
To showcase the flexibility of our preconditioning strategy in handling different 
formulations, we therefore discretize the mole fractions instead of the 
chemical potentials.
This means that the concentrations can no longer be computed using
\cref{eq:ideal_constitutive_law}.
Instead, they must be computed by means of $c_i = c_T \chi_{i}$ where
$c_T$ is computed using the equation of state \cref{eq:state}.
To employ \cref{eq:state}, we must additionally solve for $p \in \mathbb{R}$.
Since $T$ is now unknown, the value of $p$ cannot be deduced from boundary
data \cref{eq:bc_mu} as in the isothermal case.
To match the number of unknowns with the number of equations, we must enforce an 
additional constraint for determining $p$. We choose the integral constraint
\begin{equation}
	N_T = \int_\Omega \frac{p}{RT} \; \mathrm{d}x,
	\label{eq:pressure_constraint}
\end{equation}
which can be obtained by integrating \eqref{eq:state}.
Note that $N_T = \int_\Omega 
c_T \; \mathrm{d}x$ is the total number of moles of all species present in 
$\Omega$. This is an experimentally measurable quantity and we assume that it is 
known.

The thermal driving force is 
proportional to the temperature gradient by a factor that generally depends on
temperature and composition.
For example, in the 
case of monatomic ideal gaseous mixtures, the anisothermal OSM equations become
\begin{align}
 \sum_{j=0}^n \hat{\bm{M}}_{ij} v_j = \begin{cases}
     -\frac{5}{2}c_TR\nabla T, & \text{if $i=0$}, \\
     -p\nabla \chi_i, & \text{if $i \in \{1,\dots, n\}$},
 \end{cases}
\end{align}
where $\hat{\bm{M}}$ is an $(n+1) \times (n+1)$ thermal Onsager transport matrix 
defined as
\begin{align}
 \hat{\bm{M}} =
 \begin{pmatrix} \hat{\bm{M}}_{00}& \hat{m}_{0}^{T}\\ \hat{m}_{0} & \bm{M} +  
 \frac{\hat{m}_{0}\hat{m}_{0}^{T}}{\hat{\bm{M}}_{00}}\end{pmatrix}.
\end{align}
The top left entry of $\hat{\bm{M}}$ is $\hat{\bm{M}}_{00}:= 
\frac{\rho^2\hat{C}_p^2T}{k_\text{mix}}$ where $\hat{C}_p:=\frac{5c_TR}{2\rho}$ is 
the specific constant pressure heat capacity in J K\textsuperscript{$-1$} 
kg\textsuperscript{$-1$}, and $k_\text{mix}$ is the thermal conductivity of the 
mixture in J K\textsuperscript{$-1$} m\textsuperscript{$-1$} 
s\textsuperscript{$-1$}. Furthermore, $\hat{m}_0^T = \begin{pmatrix}
 \hat{\bm{M}}_{01} & \dots & \hat{\bm{M}}_{0n}
\end{pmatrix}$ and its entries are given by
\begin{align}
 \hat{\bm{M}}_{0i} = - \frac{\hat{\bm{M}}_{00}}{\rho 
 \hat{C}_{p}T}\sum_{j=1}^{n}\bm{M}_{ij}\mathscr{D}_{j}^{T} \qquad &\forall i \in 
 \{1, \dots, n \}.
\end{align}
Here $\mathscr{D}_i^T$ is the Soret diffusivity of species $i$ given by
\begin{align}
 \mathscr{D}_{i}^{T}= \frac{1}{\rho}\sum_{j=1}^{n}\rho_{j}\mathcal{A}_{ij},
\end{align}
where $\mathcal{A}_{ij}$, the Newman--Soret diffusivity of species $i$ relative to 
species $j$ in m\textsuperscript{$d-1$} s\textsuperscript{$-1$}, is a given 
constant, and 
$\rho_j:= M_jc_j$ is the density of species $j$ in kg m\textsuperscript{$-d$}. We 
have the same mass continuity equations as in the isothermal OSM equations and a 
steady-state heat balance equation for monatomic ideal gaseous mixtures:
\begin{align}
 0 = \begin{cases}
     -\frac{5}{2}\nabla \cdot (pv_{0}), & \text{if $i=0$}, \\
     \nabla \cdot (c_{i}v_{i}), & \text{if $i \in \{1, \dots, n \}$}.
 \end{cases}
\end{align}

\subsection{Gas separation chamber}
\label{sec:separation}
This problem is adapted from \cite[Sec.~7.3]{van2022consolidated}. We consider a 
closed chamber containing an equimolar mixture of helium, argon, and krypton 
initially kept at room temperature and atmospheric pressure. The right wall is 
then heated with a fixed total heat input and the left side of the chamber is held 
adjacent to a cooling bath kept at room temperature. A schematic of the chamber is 
shown in \Cref{fig:gas_separation_chamber_schematic}. When the steady-state is 
reached, there is a temperature gradient across the chamber and the species 
arrange themselves accordingly. Helium, the lightest gas, will accumulate on the 
hot side of the chamber, and krypton, the heaviest gas, will accumulate on the 
cool side of the chamber at steady-state \cite{hafskjold1993molecular}. We seek to 
compute this steady-state.

The mixing rule for the thermal conductivity is given by \cite{udoetok2013thermal}
\begin{align}
 k_{\text{mix}} = 
 \frac{1}{2}\frac{k_{\text{He}}k_{\text{Ar}}k_{\text{Kr}}}{\chi_{\text{He}}k_{\text{Ar}}k_{\text{Kr}}+
  \chi_{\text{Ar}}k_{\text{He}}k_{\text{Kr}} + 
 \chi_{\text{Kr}}k_{\text{He}}k_{\text{Ar}}} + 
 \frac{1}{2}(k_{\text{He}}\chi_{\text{He}}+ 
 k_{\text{Ar}}\chi_{\text{Ar}}+k_{\text{Kr}}\chi_{\text{Kr}}),
\end{align}
where the thermal conductivities of helium, argon, and krypton are 
$k_\text{He}=1566$ J K\textsuperscript{$-1$} m\textsuperscript{$-1$} 
s\textsuperscript{$-1$}, $k_\text{Ar}=178$ J K\textsuperscript{$-1$} 
m\textsuperscript{$-1$} s\textsuperscript{$-1$}, and $k_\text{Kr}=95$ J 
K\textsuperscript{$-1$} m\textsuperscript{$-1$} s\textsuperscript{$-1$} 
respectively. The Newman--Soret diffusivity matrix $\mathcal{A}$ is
\begin{align}
 \mathcal{A} = \begin{pmatrix}
     0 & -0.2910 & -0.2906 \\
     0.2910 & 0 & 0.004 \\
     0.2906 & -0.004 & 0 
 \end{pmatrix} \times 10^{-4} \text{ m}^2 \text{ s}^{-1},
\end{align}
and the Stefan--Maxwell diffusivity matrix $\mathscr{D}$ is
\begin{align}
 \mathscr{D} = 
 \begin{pmatrix}
     & 0.7560 & 0.6653 \\
     0.7560 & & 0.2467 \\
     0.6653 & 0.2467 & \\
 \end{pmatrix} \times 10^{-4} \text{ m}^2 \text{ s}^{-1}.
\end{align}
The molar masses of helium, argon, and krypton are $0.004$ kg 
mol\textsuperscript{$-1$}, $0.03995$ kg mol\textsuperscript{$-1$}, and $0.08380$ 
kg mol\textsuperscript{$-1$} respectively. 
We set $v_\text{bulk}=0$ and $r_i = 0$ for all $i$.
One can deduce the value of the constraint data $N_T$ in 
\cref{eq:pressure_constraint} using the fact that the mixture
was initially at room temperature and atmospheric pressure.

We impose zero-flux boundary conditions for the thermal velocity along the 
insulated walls, i.e.~adiabatic walls, and zero-flux boundary conditions for the 
mass fluxes along all walls, i.e.~impermeable walls. On the heated wall we enforce 
$\frac{5}{2}pv_0 = 1\times10^{5}$ J m\textsuperscript{$-2$} 
s\textsuperscript{$-1$} which relates the thermal velocity to the total heat input 
along the heated wall. We enforce $T=300$ K along the cooled wall. Due to the full 
Neumann boundary conditions on the mass fluxes we need extra constraints for the 
system to admit a unique solution
\cite{baierreinio2024highorderfiniteelementmethods}.
We impose the integral constraints
\begin{align}
 N_i = \int_\Omega c_i \; \mathrm{d}x \qquad \forall i \in \{1,\dots, n \},
 \label{eq:concentration_constraint}
\end{align}
where $N_i$ is the total number of moles of species $i$,
chosen consistently with the requirement that $N_T = \sum_{i=1}^n N_i$.
In the next subsection we 
will discuss how to enforce integral constraints like 
\eqref{eq:pressure_constraint} and \eqref{eq:concentration_constraint}.

\begin{figure}
 \centering
 \begin{subfigure}{0.45\textwidth}
     \includegraphics[width=\linewidth]{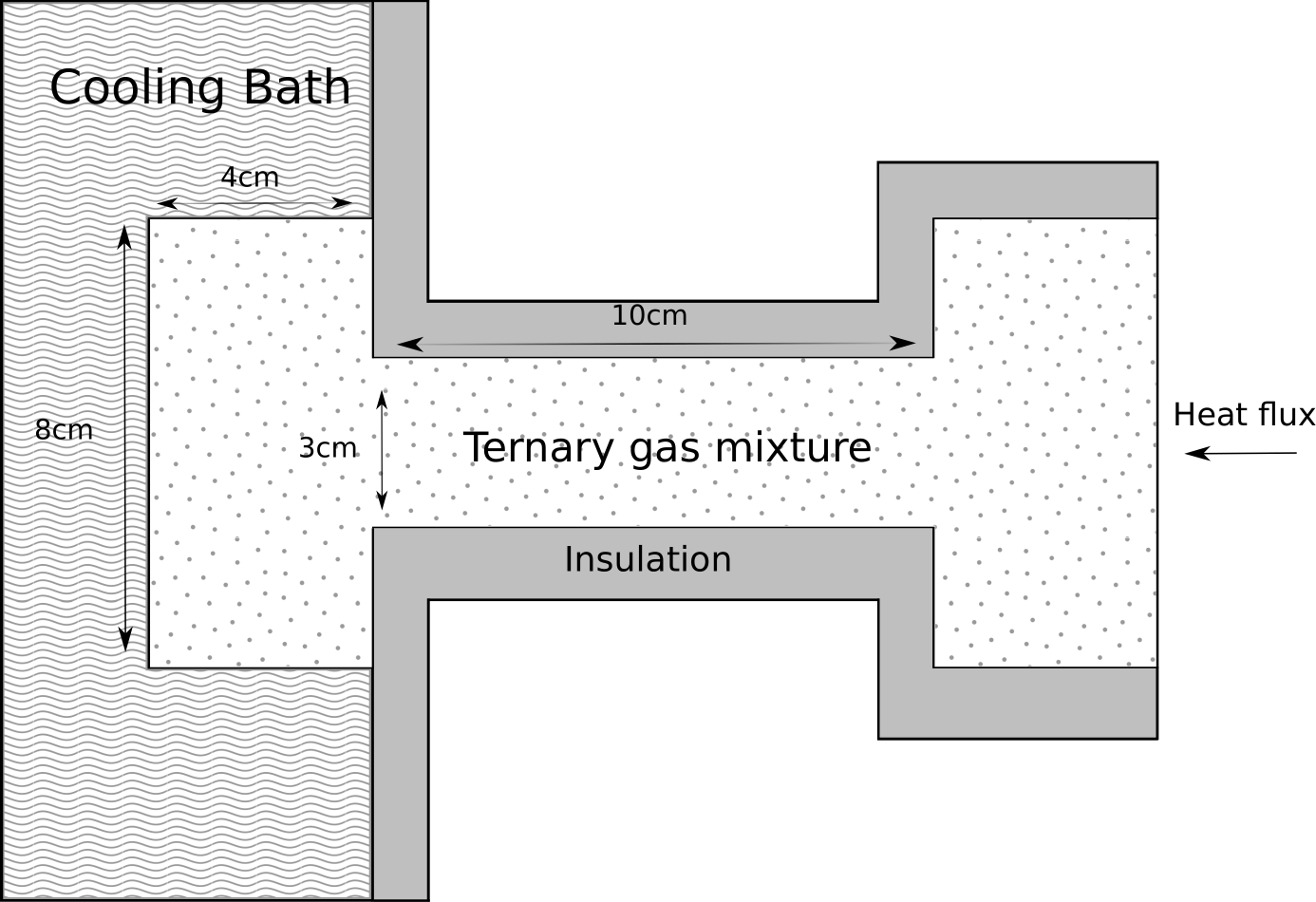}
     \caption{}
     \label{fig:gas_separation_chamber_schematic}
 \end{subfigure}
 \begin{subfigure}{0.5\textwidth}
     \includegraphics[width=\linewidth]{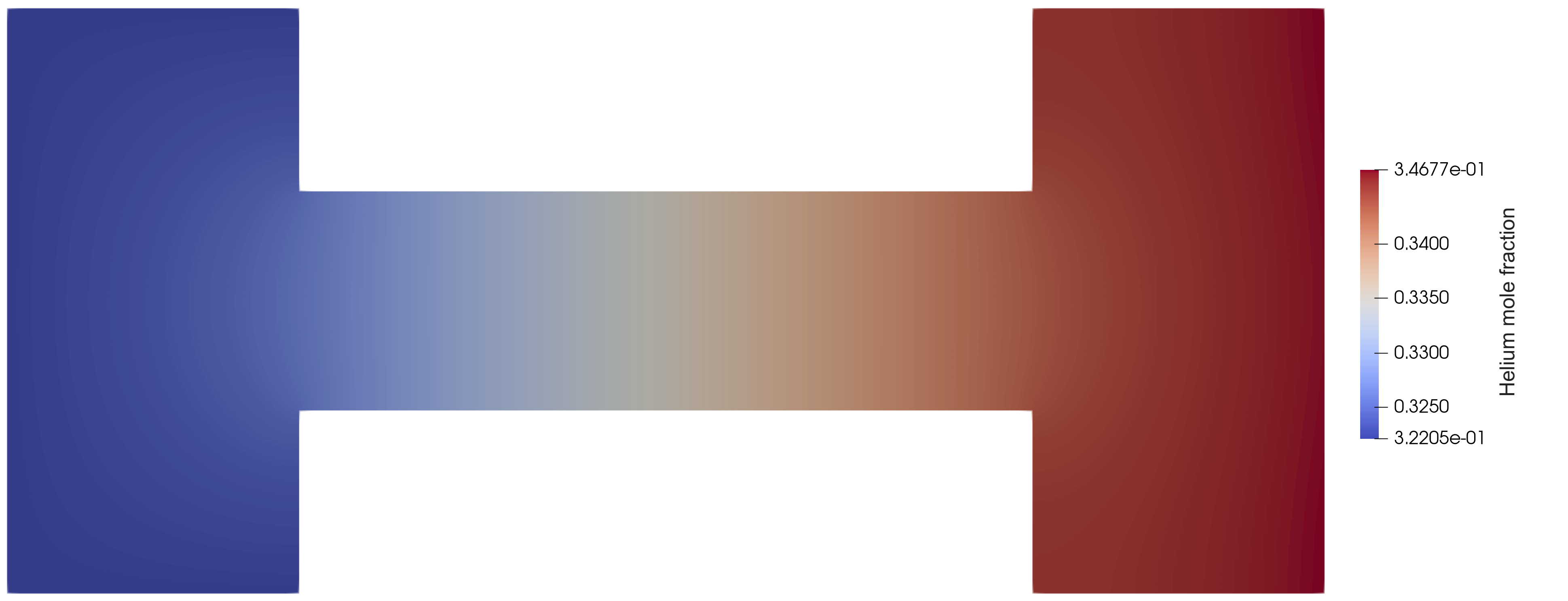}
     \vspace{1em}
     \caption{}
     \label{fig:helium}
 \end{subfigure}
 \caption{(a) Schematic of the gas separation chamber and (b) helium mole 
 fraction. The schematic in (a) is reproduced from \cite{van2022consolidated}.}
 \label{fig:gas_separation_chamber}
\end{figure}

\subsection{Integral constraints}
\label{sec:integral_constraints}

Due to the full Neumann boundary conditions on the mass fluxes, discretization
of the mass continuity equations $\nabla \cdot (c_{i}v_{i}) = 0$ yields a set
of algebraic equations in which $n$ of the equations are redundant
\cite{baierreinio2024highorderfiniteelementmethods}.
These redundant equations should be replaced with the constraints 
\cref{eq:concentration_constraint}. This is accomplished using the 
Woodbury formula \cite{hager1989updating} together with sparse direct solvers in
\cite{baierreinio2024highorderfiniteelementmethods}.
Since this paper considers regimes where sparse direct solvers are computationally 
unaffordable, we prefer the following alternative approach based on Lagrange multipliers.

We enforce the constraints \eqref{eq:concentration_constraint} by
introducing the decomposition
$\chi_i = \chi_i^\text{aux} + \lambda_i$ with
$\lambda_i \in \mathbb{R}$ an unknown scalar for each $i$.
We solve for the new variables
$\bar{\chi}^\text{aux}$ and $\bar{\lambda}$ instead of $\bar{\chi}$. Let $s_i \in 
\mathbb{R}$ be the test function corresponding to $\lambda_i$.
We enforce \eqref{eq:concentration_constraint} weakly using
\begin{align}
 \int_\Omega c_is_i \, \mathrm{d}x = \frac{1}{V} \int_\Omega N_is_i \, 
 \mathrm{d}x\qquad &\forall i \in \{1, \dots, n \},
 \label{eq:weak_concentration_constraint}
\end{align}
where $V$ denotes the volume of $\Omega$. We also fix $\chi_i^\text{aux}$ to be a 
fixed but arbitrary value at a mesh vertex, to
eliminate its indeterminacy up to constant additive shifts.

The total molar constraint \eqref{eq:pressure_constraint} can be 
enforced analogously to \eqref{eq:weak_concentration_constraint}, by
using a test function associated to the unknown scalar $p \in \mathbb{R}$.

\subsection{Preconditioners for anisothermal mixtures}
We extend the AL technique to include the thermal energy pseudo-species by adding 
the term 
$\kappa_0^l(\frac{5}{2}\nabla \cdot (pv_0))(\nabla \cdot \tau_0)$ to the weak 
formulation. It remains to deal with the real variables for the integral 
constraints in an appropriate manner. First of all, each integral constraint 
creates a dense row and column in the Jacobian. If the CSR format were used, each 
matrix row would be stored on a single processor, and the action of this dense row 
would require the whole system state to be loaded onto that processor. This would 
be extremely costly when running in parallel. Fortunately, assembly of these dense 
rows is not necessary as GMG is implemented matrix-free. Since the diagonal block 
corresponding to the integral constrains is a small $(n+1) \times (n+1)$ matrix, 
we can solve this block exactly (in exact arithmetic) in matrix-free fashion using 
$n+1$ GMRES iterations. As a result, the outer iteration changes to flexible GMRES.

Linear and nonlinear iteration counts of the monolithic GMG preconditioner with AL 
smoother for the gas separation chamber problem are displayed in 
\Cref{tab:separation}. We obtain mildly increasing FGMRES iteration counts as $m$ 
is increased but robustness with respect to $k$; the robustness with respect to 
$m$ improves as $k$ is increased. The mole fraction of helium is depicted in 
\Cref{fig:helium}. We note a separation of 7.5\% under a temperature difference of 
approximately 93 K.

\begin{table}[]
 \centering
 \scriptsize{
     \begin{tabular}{r|*{5}{c}}
     \toprule
     $m\setminus k$ 
     & 1 & 2 & 3 & 4 & 5
     \\
     \midrule
     1 & 7.25 (4) & 5.75 (4) & 5.25 (4) & 5 (4) & 4.5 (4) \\
     \rowcolor{gray!25}
     2 & 9.25 (4) & 6 (4) & 5.75 (4) & 6.25 (4) & 6.25 (4) \\
     3 & 12.75 (4) & 6.5 (4) & 6 (4) & 7 (4)  & 6.75 (4) \\
     \rowcolor{gray!25}
     4 & 17.75 (4) & 7.75 (4)& 7.75 (4) & 7.5 (4) & 6 (4)\\
     5 & 25 (4) & 9 (4)& 9 (4)& 7 (4)& 6.5 (4) \\
     \bottomrule
     \end{tabular}
 }
 \caption{Average number of FGMRES iterations for the gas separation chamber 
 problem of \Cref{sec:separation}, solved with the monolithic GMG preconditioner 
 with AL smoother. The number of Newton iterations is given in brackets.}
 \label{tab:separation}
\end{table}

\section{Nonideal mixtures}
\label{sec:nonideal}
The general setting permits nonideal mixing of nonideas gases and condensed phases.
In this case, the chemical potentials and total concentration are given by 
algebraic constitutive laws
\begin{align}
 \mu_i - \mu_i^\text{ref} &= G_i(T, p, \bar{\chi}) \qquad \forall i \in \{1, 
 \dots, n \}, \label{eq:nonideal_constlaw_G} \\
 1/c_T &= \sum_{j=1}^n \chi_j V_j(T,p, \bar{\chi}),
\end{align}
where $G_i$ and $V_i$ are known partial molar Gibbs functions and partial molar 
volume functions respectively. For ideal gaseous mixtures $G_i = RT\log \left ( 
\chi_i\frac{p}{p_\text{ref}} \right )$ and $V_i = RT/p$ for each $i\in \{1,\dots, 
n\}$. 
In the ideal gaseous setting
$\bar{\chi}$ can be expressed explicitly in terms of $\bar{\mu}$, so only 
one of these variables needs to be discretized. In
nonideal mixtures both $\bar{\chi}$ and $\bar{\mu}$ are discretized, and the 
$L^2(\Omega)$-projection of \eqref{eq:nonideal_constlaw_G} is enforced with 
$\chi_i$ sought in $\text{DG}_{k-1}$ for each $i \in \{1, \dots, n \}$
\cite{baierreinio2024highorderfiniteelementmethods}.

Furthermore, we can include pressure and convective effects by precomputing a 
consistent total mass flux $\mathcal{J} := \rho v_{\text{bulk}}$ and pressure
using the Stokes equations
\begin{align}
 - \frac{1}{\rho^\text{ref}}\nabla \cdot \mathcal{C}^{-1}\epsilon (\mathcal{J}) + 
 \nabla p &= \rho F, \\
 \nabla \cdot \mathcal{J} & = \sum_{i=1}^n M_ir_i,
\end{align}
where $F: \Omega \rightarrow \mathbb{R}^d$ is the prescribed body force in kg m s\textsuperscript{$-2$} and $\mathcal{C}: \mathbb{R}^{d\times d}_\text{sym} 
\rightarrow \mathbb{R}^{d\times d}_\text{sym}$ is the compliance tensor 
corresponding to the constitutive law, which we take to be Newtonian. Here we have 
used the approximation $v_{\text{bulk}}\approx \rho 
v_{\text{bulk}} / \rho^\text{ref}$ where $\rho^\text{ref}$ is a constant reference 
density. This is a reasonable approximation for nearly incompressible fluids. By 
considering $\mathcal{J}$ instead of $v_\text{bulk}$, boundary conditions that are 
consistent with the mass-average constraint \cref{eq:vbulk} can be imposed 
without having to couple the Stokes and OSM problems.
Precomputing a total mass 
flux and pressure using the Stokes equations allows us to drop the isobaric 
assumption in the OSM equations, so that in the isothermal setting the diffusion 
driving forces become
\begin{align}
 d_i = -c_i\nabla \mu_i + \frac{M_ic_i}{\rho}\nabla p \qquad \forall i \in \{1, 
 \dots, n \}.
\end{align}
Similarly to \cite{baierreinio2024highorderfiniteelementmethods} this requires the 
introduction of the inverse density $\psi \coloneqq {\rho}^{-1}$ as a variable and 
the enforcement of the $L^2(\Omega)$-projection of $1/\psi = \sum_{i=1}^nM_ic_i$. 
In the discrete problem, we use a Taylor--Hood $\text{CG}_k$--$\text{CG}_{k-1}$ 
finite element pair for $\mathcal{J}$ and $p$, and $\text{CG}_{k-1}$ for $\psi$.

\subsection{Benzene-cyclohexane mixture}
Consider a microfluidic device in which pure benzene and cyclohexane are piped in 
through two separate inlets and mix before flowing through the outlet. This mixing 
is nonideal, as the benzene-cyclohexane interactions are weaker than 
benzene-benzene interactions. This problem is an adaptation of 
\cite[Sec.~5.2.]{baierreinio2024highorderfiniteelementmethods} in which the full 
coupling of the OSM equations with the Stokes equations is considered.

The mixing of these species is captured by a Margules-type thermodynamic 
constitutive relation \cite{perry1950chemical}
\begin{subequations} \label{eq:Margules}
 \begin{align}
     \mu_1 &= \frac{p}{c_1^{\text{ref}}} + RT\log (\chi_1) + RT \chi_2^2(A_{12} + 
     2(A_{21} - A_{12})\chi_1), \label{eq:Margules_a} \\
     \mu_2 &= \frac{p}{c_2^{\text{ref}}} + RT\log (\chi_2) + RT \chi_1^2(A_{21} + 
     2(A_{12} - A_{21})\chi_2), \label{eq:Margules_b} \\
     \frac{1}{c_T} &= \frac{\chi_1}{c_1^\text{ref}} + 
     \frac{\chi_2}{c_2^\text{ref}}, \label{eq:Margules_c}
 \end{align}
\end{subequations}
where $A_{12}=0.4498$ and $A_{21}=0.4952$. For each $i \in \{1,2\}$ we 
prescribe no-normal fluxes $J_i \cdot \hat{n} = 0$ on all walls, and on the inlet 
and outlet $J_i$ has zero tangential component and a parabolic profile for the 
normal component. The magnitude of the parabolic profiles are 
$M_1c_1^{\text{ref}}v_1^{\text{ref}}$ and $M_2c_2^{\text{ref}}v_2^{\text{ref}}$ 
for $J_1$ and $J_2$ respectively. For the problem to admit a unique solution we 
enforce integral constraints $\int_\Omega \chi_1 + \chi_2 -1 \; \mathrm{d}x=0$ and 
$\int_{\partial \Omega_\text{out}}M_1c_1 - M_2c_2\; \mathrm{d}x=0$. Further, we 
define the parameters $\mathscr{D}_{12}=\mathscr{D}_{21}=2.1 \times 10^{-9} \; 
\text{m}^2 \; \text{s}^{-1}$, $M_1 = 0.078 \; \text{kg} \; \text{mol}^{-1}$, $M_2 
= 0.084 \; \text{kg} \; \text{mol}^{-1}$, $M_1c_1^{\text{ref}} = 876 \; \text{kg} 
\; \text{m}^{-3}$, $M_2c_2^{\text{ref}} = 773 \; \text{kg} \; \text{m}^{-3}$, 
$v_1^{\text{ref}} = 10 \times 10^{-6} \; \text{m} \; \text{s}^{-1}$, and 
$v_2^{\text{ref}} = \frac{c_1^{\text{ref}}}{c_2^{\text{ref}}} v_1^{\text{ref}}$. 

To precompute a consistent $\mathcal{J}$ and $p$ using the Stokes problem we 
assume $\mathcal{J}=0$ along the walls, and on the inlet and outlet the bulk 
velocity has zero tangential component and a parabolic profile for the normal 
component. The magnitude of the parabolic profile on the benzene and cyclohexane 
inlets are $M_1c_1^{\text{ref}}v_1^{\text{ref}}$ and $M_2c_2v_2^{\text{ref}}$ 
respectively, and the magnitude of the parabolic profile on the outlet is 
$M_1c_1^{\text{ref}}v_1^{\text{ref}} + M_2c_2v_2^{\text{ref}}$ which ensures a 
mass balance if the inlet and outlet are the same size. The shear and bulk 
viscosity are $\eta = 6 \times 10^{-4} \; \text{Pa} \; \text{s}$ and $\zeta = 
10^{-7} \; \text{Pa} \; \text{s}$.

\subsection{Preconditioners for nonideal mixtures}
We apply the AL preconditioning strategy as before, so we only need to incorporate 
the diagonal blocks in the Jacobian corresponding to the extra variables in this 
application. The $2 \times 2$ diagonal block in the Jacobian corresponding to 
$\bar{\chi}$ is solved exactly as the $\text{DG}$ spaces decouple across cells. 
The diagonal block corresponding to $\psi$ is a $\text{CG}$ mass matrix in the 
Newton linearization, and smoothed scalably using Chebyshev--Jacobi iteration 
\cite{wathen2008chebyshev}. The real variables corresponding to the integral 
constraints are treated as in \Cref{sec:integral_constraints}. The domain geometry 
is created using ngsPETSc \cite{betteridge2024ngspetsc, schoberl1997netgen}. The 
coarse mesh has 5478 cells in total. We employ continuation on the magnitude of 
the boundary data to aid convergence of the nonlinear solver.

In \Cref{tab:benzene-cyclohexane} the average number of FGMRES and Newton 
iterations are displayed and we observe mildly increasing FGMRES iteration counts 
with $m$, but robustness with $k$. The mole fraction of benzene and streamlines 
of the benzene mass flux are plotted in \Cref{fig:benzene}.

\begin{figure}
 \centering
 \includegraphics[width=0.5\linewidth]{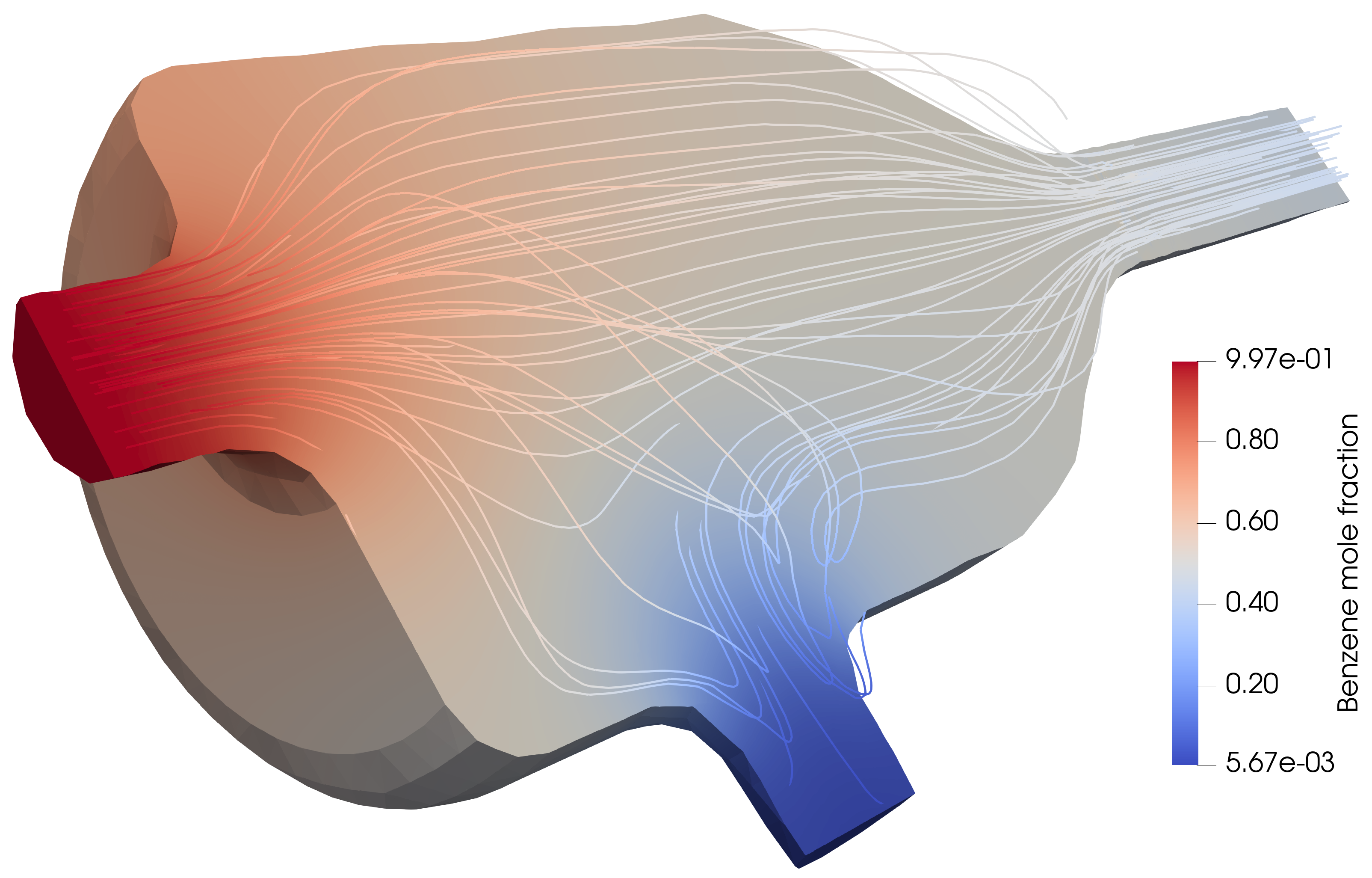}
 \caption{Benzene mole fraction on a cross section of the microfluidic 
 device and streamlines of the benzene mass flux.}
 \label{fig:benzene}
\end{figure}

\begin{table}[]
 \centering
 \scriptsize{
     \begin{tabular}{r|*{3}{c}}
     \toprule
     $m\setminus k$ 
     & 2 & 3 & 4
     \\
     \midrule
     1 & 9.88 (4.8) & 7.25 (4.8) & 6.54 (4.8)  \\
     \rowcolor{gray!25}
     2 & 12.21 (4.8) & 8.92 (4.8) &  \\
     \bottomrule
     \end{tabular}
 }
 \caption{Average number of FGMRES iterations for the benzene-cyclohexane mixture 
 of \Cref{sec:nonideal} solved with the monolithic GMG preconditioner with AL 
 smoother. The average number of Newton iterations per continuation step is given in brackets.}
 \label{tab:benzene-cyclohexane}
\end{table}

\section{Electrolytic mixtures}
\label{sec:electrolytic}

Electrolytic solutions can be modeled by incorporating local electroneutrality in 
the OSM equations. Local electroneutrality poses a constraint on the charged 
species that local excess charge density is zero everywhere, i.e.
\begin{align}
 \sum_{j=1}^n z_jc_j = 0,
\end{align}
where $z_j$ is the equivalent charge of species $j$. This constraint makes the 
species concentrations linearly dependent, and thus one species can be eliminated 
from the equations. By transforming the OSM equations using a \textit{salt-charge 
basis}, $n_c-1$ neutral salts are modeled instead of $n_c$ charged species 
\cite{van2023structural}. This construction gives rise to a natural definition of 
electricity described by an electric potential $\Phi$ in V and a current density $i$ 
in A m\textsuperscript{$-d$}. Analogously to thermal energy in the gas separation 
chamber, electricity will appear as a pseudo-species in the electroneutral OSM 
equations.

The electroneutral OSM equations structurally look identical to 
\cref{eq:augmented_OSM_} except $\widetilde{\bm{M}}^\gamma$ is replaced by 
congruent 
$\tilde{\bm{M}}^\gamma_{\bm{Z}}$ after transformation using the salt-charge basis; 
see \cite{baier2025finite, van2023structural} for details. Often problem parameters 
and constitutive laws in the chemical potential formulation are not 
experimentally available, and it is therefore imperative to formulate the 
electroneutral OSM equations in terms of mole fractions $\bar{\chi}$. 
When employing the salt-charge transformation it is also more convenient to use 
mole fluxes $N_j := c_j v_j$ instead of mass fluxes $J_j = M_j N_j$.

\subsection{Hull cell}
The Hull cell is an experimental device used to evaluate electroplating behavior 
in an electrolytic solution. The cell consists out of an electrolytic mixture 
inside a trapezoidal container with electrodes placed on the nonparallel sides. 
Under an applied electric current plating and stripping of cations occurs on the 
electrodes. We consider a nonideal binary electrolytic solution used in 
lithium-ion batteries: lithium hexafluorophosphate (LiPF\textsubscript{6}) 
dissolved in ethyl methyl carbonate (EMC) and ethylene carbonate (EC). In contrast 
to previous work \cite{baier2025finite}, we solve for the logarithms of the mole 
fractions $\log (\bar{\chi})$ as this makes integration by parts directly feasible 
without the introduction of continuous counterparts of the mole fractions.

On the electrodes we enforce linearized Butler--Volmer boundary conditions, 
i.e.\ Robin boundary conditions relating $\Phi$ and $i$, as well as 
$(N_{\text{LiPF}_6} - 0.5i)\cdot \hat{n}=0$. Finally, integral constraints are 
incorporated to enforce the spatially averaged LiPF\textsubscript{6} and EMC mole 
fractions, and unity of the mole fraction sum.

\subsection{Preconditioners for electrolytic mixtures}
Since $(N_{\text{LiPF}_6} - 0.5i)\cdot \hat{n}=0$ is not a Dirichlet boundary 
condition and cannot naturally be enforced through the weak formulation either, 
its $L^2(\partial \Omega_\text{elec})$-projection is enforced instead. This 
overwrites the equations for the degrees of freedom on the boundary, and is not
compatible with our implementation of the AL preconditioner.
Thus, for this problem we have used the monolithic GMG preconditioner with vertex 
Vanka smoother instead. In \Cref{tab:hull-cell} the average number of FGMRES and 
Newton iterations are displayed and good robustness is demonstrated for all mesh 
refinements and $k \geq 2$.

\begin{figure}
 \centering
 \includegraphics[width=0.7\linewidth]{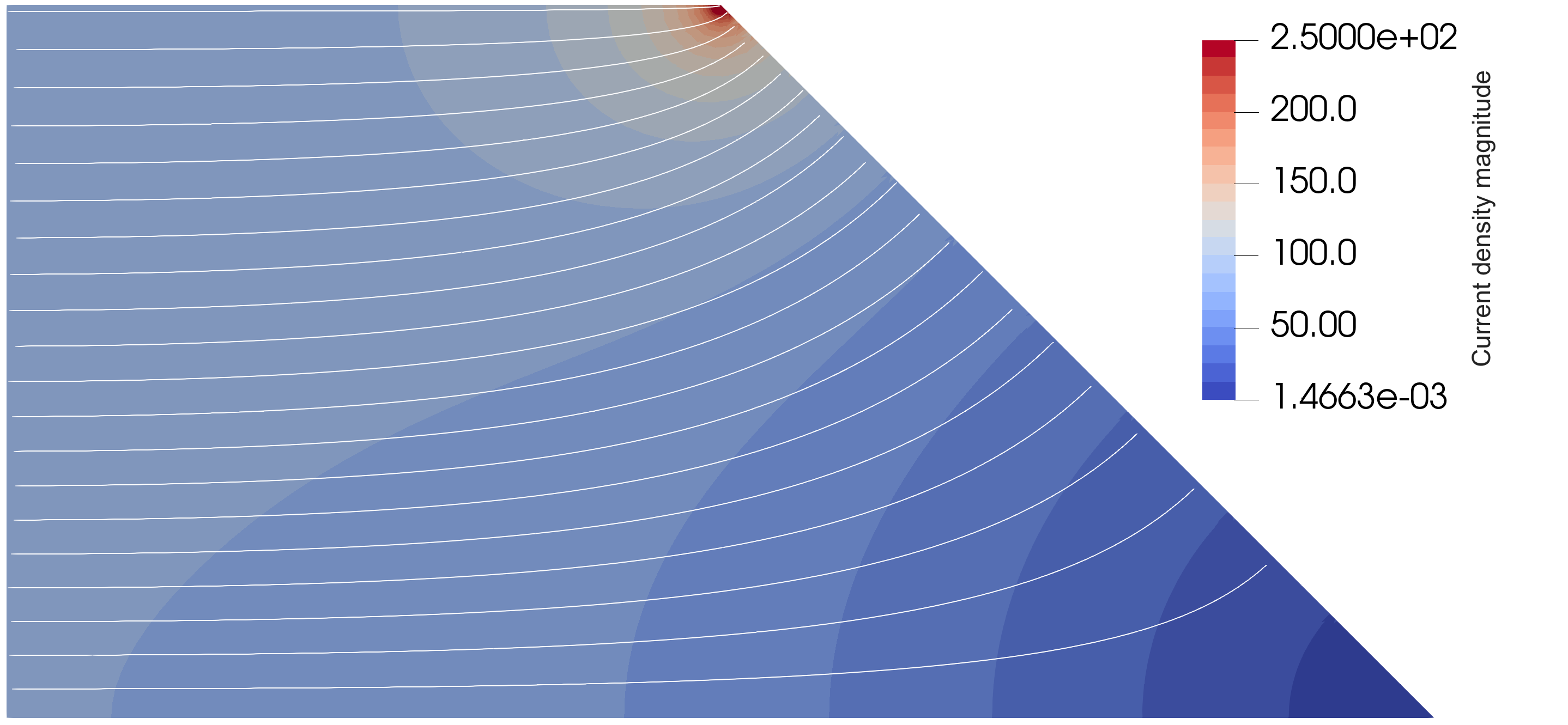}
 \caption{Magnitude of the current density in the steady-state Hull cell. Current 
 flows along the streamlines from the cathode on the left to the anode on the 
 right.}
 \label{fig:hull-cell}
\end{figure}

\begin{table}[]
 \centering
 \scriptsize{
     \begin{tabular}{r|*{5}{c}}
     \toprule
     $m\setminus k$ 
     & 1 & 2 & 3 & 4 & 5
     \\
     \midrule
     1 & 6.6 (5) & 5.2 (5)& 4.2 (5)& 4.8 (5) & 4.2 (5)\\
     \rowcolor{gray!25}
     2 & 7.4 (5)& 5.6 (5)& 4.4 (5) & 5.0 (5)& 4.6 (5)\\
     3 & 8.4 (5)& 5.8 (5)& 4.6 (5)& 4.6 (5)& 4.4 (5)\\
     \bottomrule
     \end{tabular}
 }
 \caption{Average number of FGMRES iterations for the Hull cell problem solved 
 with the monolithic GMG preconditioner with vertex Vanka smoother and number of 
 Newton iterations in brackets.}
 \label{tab:hull-cell}
\end{table}

\section{Conclusions}
\label{sec:conclusions}
In this work we have introduced scalable preconditioners for a wide variety of 
multicomponent diffusion problems modeled by the OSM equations. In the isobaric, 
isothermal, ideal gaseous regime, discretization-robustness of an augmented 
Lagrangian preconditioner for a Picard linearization is proven and demonstrated 
numerically. This augmented Lagrangian preconditioner inspires a monolithic 
multigrid preconditioner with augmented Lagrangian smoother for the Newton 
linearization which achieves discretization-robust or mildly increasing GMRES 
iteration counts, scalability through the use of patch smoothers, and 
computational advantage compared to other preconditioners. The flexibility of the 
monolithic multigrid strategy is successfully demonstrated for mixtures with 
cross-diffusion, nonideal mixing, thermal, pressure, convective, and 
electrochemical effects.

\section*{Acknowledgments}
The authors are grateful to Pablo Brubeck and Jongho Park for their useful 
discussions and feedback about this work.

\bibliographystyle{siamplain}
\bibliography{references}
\end{document}